\newtheorem{theorem}{Theorem}
\newtheorem*{theorem*}{Theorem}
\newtheorem{lemma}[theorem]{Lemma}
\newtheorem{proposition}[theorem]{Proposition}
\newtheorem{corollary}[theorem]{Corollary}
\theoremstyle{definition}
\newtheorem*{definition*}{Definition}
\theoremstyle{remark}
\newtheorem{remark}[theorem]{Remark} 
\numberwithin{equation}{section}
\crefname{lemma}{Lemma}{Lemmas}
\crefname{figure}{Figure}{Figures}
\crefname{theorem}{Theorem}{Theorems}
\crefname{equation}{Equation}{Equations}
\crefname{corollary}{Corollary}{Corollaries}
\crefname{table}{Table}{Tables}
\crefname{section}{Section}{Sections}
\crefname{remark}{Remark}{Remarks}
\crefname{proposition}{Proposition}{Propositions}
\newcommand{\mysc}[1]{\textrm{\textsc{#1}}}
\newcommand{\HH}{\mathbb{H}}
\newcommand{\vertiii}[1]{{\left\vert\kern-0.25ex\left\vert\kern-0.25ex\left\vert #1 
		\right\vert\kern-0.25ex\right\vert\kern-0.25ex\right\vert}}
\DeclarePairedDelimiter\ceil{\lceil}{\rceil}
\DeclarePairedDelimiter\floor{\lfloor}{\rfloor}
\newcommand*\tri{\vcenter{\hbox{\includegraphics[width=.7em]{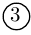}}}}
\DeclareMathOperator{\crg}{cr}
\DeclareMathOperator{\bcr}{bcr}
\DeclareMathOperator{\ctcr}{cr_{\tri}}
\newcommand*\circled[1]{\tikz[baseline=(char.base)]{
		\node[shape=circle,draw,inner sep=.5pt](char){\fontsize{4pt}{0cm}\selectfont #1};}}
\newcommand{\crN}[1]{\crg_{\circled{$#1$}}}
\newcommand{\bcrN}[1]{\bcr_{\circled{$#1$}}}
\newcommand{\vtx}[4]{#1_{#2}(\mysc{#3,#4})}
\newcommand{\mind}{\min d\xspace}
\newif\iflatexml\latexmlfalse
\title{Bounding the tripartite-circle crossing number of complete tripartite graphs}
\author[1]{Charles Camacho}
\affil[1]{University of Washington}
\author[2]{Silvia Fern{\'a}ndez-Merchant}
\affil[2]{California State University, Northridge}
\author[3]{Marija Jeli{\'c} Milutinovi{\'c}}
\affil[3]{University of Belgrade}
\author[4]{Rachel Kirsch}
\affil[4]{George Mason University}
\author[5]{Linda Kleist}
\affil[5]{Technische Universit{\"a}t Braunschweig}
\author[6]{Elizabeth B. Matson}
\affil[6]{Alfred University}
\author[7]{Jennifer White}
\affil[7]{Saint Vincent College}
\begin{document}
	
	\maketitle
	\selectlanguage{english}
	\begin{abstract}
		A tripartite-circle drawing of a tripartite graph is a drawing in the plane, where each part of a vertex partition is placed on one of three disjoint circles, and the edges do not cross the circles. 
		We present upper and lower bounds on the minimum number of crossings in  tripartite-circle drawings of $K_{m,n,p}$.
		In contrast to 1- and 2-circle drawings, which may attain the Harary-Hill bound, our results imply that balanced restricted 3-circle drawings of the complete graph are not optimal.
		
		\medskip
		\noindent 
		\textbf{Keywords} --- crossing number, complete tripartite graph, circle drawing, 3-circle drawing, Harary-Hill bound%
	\end{abstract}

	\section{Introduction}

The \emph{crossing number} of a graph $G$, denoted by $\crg(G)$, is the minimum number of edge crossings over all drawings of $G$ in the plane. It quantifies how close or far a graph is from being planar. Drawings with few crossings have been studied in connection with readability and VLSI chip design \cite{leighton}. See \cite{Sch14} for a survey of crossing number variants and some of their applications. 
Computing the crossing number of a graph is an NP-hard problem \cite{garey1983NP,HlinenyNP}. The precise values are not known even for very special graph classes such as complete and complete bipartite graphs. Nevertheless, there exist long-standing conjectures. Zarankiewicz~\cite{Z} conjectured that for the complete bipartite graph $K_{m,n}$, the bound
\[\crg(K_{m,n})\leq 
\bigg\lfloor\frac{m}{2}\bigg\rfloor\left\lfloor\frac{m-1}{2}\right\rfloor
\bigg\lfloor\frac{n}{2}\bigg\rfloor\left\lfloor\frac{n-1}{2}\right\rfloor \eqqcolon Z(m,n),\]
given by a certain straight-line drawing of $K_{m,n}$ with vertices placed along two axes, is the best possible. Later, Harary and Hill~\cite{HH} conjectured that the upper bound for the complete graph $K_n$,
\[
\crg(K_n)\leq  \frac{1}{4}\bigg\lfloor\frac{n}{2}\bigg\rfloor\left\lfloor\frac{n-1}{2}\right\rfloor\left\lfloor\frac{n-2}{2}\right\rfloor\left\lfloor\frac{n-3}{2}\right\rfloor \eqqcolon H(n),
\]
given by Guy~\cite{G1960}, is the best possible.

Among the best known drawings of complete graphs are drawings where the vertices are placed on one or two circles and edges do not cross the circles. Such drawings are $1$-circle drawings (or \emph{$2$-page book drawings})~\cite{AAFRS13} and $2$-circle drawings (or \emph{cylindrical drawings})~\cite{AAFRS}, respectively. (For more details, refer to Section \ref{sec:HH}.) A question of interest \cite{JK, AAFRV} is to determine which other families of drawings of~$K_n$ achieve the conjectured minimum number of crossings, $H(n)$. One possible direction is to look at greater numbers of circles.

As a natural extension of $1$- and $2$-circle drawings,  a \emph{$k$-circle drawing} of a graph~$G$ in the plane is a drawing in which the vertices are placed on $k$ disjoint circles and the edges do not cross the circles~{\cite{FGHLM}}. The minimum number of crossings in a $k$-circle drawing of a graph~$G$ is the \emph{$k$-circle crossing number} of~$G$. For the special case when $G$ is a $k$-partite graph, if we further require that the vertices on each circle form an independent set, we call these drawings \emph{$k$-partite-circle drawings}. 
We call the minimum number of crossings in a $k$-partite-circle drawing the 
\emph{$k$-partite-circle crossing number} and denote it by $\crN{k}(G)$.
In this paper, we determine bounds for the tripartite-circle crossing number of complete tripartite graphs, and we conclude that for $n \ge 13$ there are no balanced restricted 3-circle drawings of $K_n$ that achieve the minimum number of crossings.

\subsection*{Previous results and related work}

In this section, we concentrate on bipartite-circle crossing numbers and on the crossing numbers of complete tripartite graphs.
The $2$-circle drawings are also called cylindrical because they can be thought of as drawings on the surface of a cylinder, with the vertices on the top and bottom circles. Analogously, a $3$-circle drawing can also be understood as a pair of pants drawing (an instance of the \emph{map crossing number} \cite{PSS}), where two circles are enclosed by the third. A \emph{radial drawing} \cite{B2007} of two concentric circles is equivalent to a 2-circle drawing (cylindrical drawing). For $k$-circle drawings with $k\geq 3$, three or more concentric circles {(or more generally three pairwise nested circles)} would require that any edges from the outermost to the innermost circle would necessarily cross the middle circle(s), so a radial drawing with three or more concentric circles is not equivalent to a $k$-circle drawing. {Consequently, 3-partite-circle drawings of complete tripartite graphs do not contain three pairwise nested circles.}

The bipartite-circle drawings of bipartite graphs, in which the vertices of each part are placed on a circle and no edge crosses a circle, are of special interest due to their connection to one of the conjectured optimal drawings of $K_n$. A $2$-circle drawing of $K_n$ with $H(n)$ crossings can be obtained from a bipartite-circle drawing of $K_{\floor{n/2},\ceil{n/2}}$ by adding straight-line edges between vertices on the same circle. In general, the bipartite-circle crossing number of complete bipartite graphs, also known as the \emph{bipartite cylindrical crossing number}, is fully understood. In 1997, Richter and Thomassen~\cite{RT} settled the balanced case by showing that
\begin{equation}
	\crN{2}(K_{n,n})=n\binom{n}{3}.
	\label{eq:bipartitebalanced}
\end{equation} 
\'Abrego, Fern\'andez-Merchant, and Sparks~\cite{AFS} generalized this result to all complete bipartite graphs. For $m\leq n$, the bipartite-circle crossing number is
\begin{align}
	\crN{2}(K_{m,n})
	=\binom{n}{2}\binom{m}{2}
	+&\sum_{1\leq i<j \leq m} \left(\left\lfloor\frac{n}{m}(j-1)\right\rfloor-\left\lfloor\frac{n}{m}(i-1)\right\rfloor\right)^2 \nonumber \\ 
	-n\cdot&\sum_{1\leq i<j \leq m} \left(\left\lfloor\frac{n}{m}(j-1)\right\rfloor-\left\lfloor\frac{n}{m}(i-1)\right\rfloor\right).
	\label{eq:bipartiteGeneral}
\end{align}
In particular, if $m$ divides $n$, then 
$\crN{2}(K_{m,n})=\frac{1}{12}n(m-1)(2mn-3m-n).$

For the general crossing number of complete tripartite graphs, Gethner et al.~\cite{GHLPRY} proved an upper bound $A(m,n,p)$ on $\crg(K_{m,n,p})$ that is analogous to the Zaran\-kie\-wicz Conjecture for $K_{m,n}$. 
Additionally, they proved that among 
\emph{straight-line drawings}
their bound is asymptotically very close to best possible. For balanced tripartite graphs, $A(n,n,n)$ is of order $\nicefrac{9}{16}\cdot n^4$, much less than our lower bound of $\nicefrac{5}{4}\cdot n^4$ (see \cref{th:balanced}) because their drawings are not restricted by circles. (A similar gap exists between $Z(n,n) \sim \nicefrac{1}{16}\cdot n^4$ and $\crN{2}(K_{n,n}) \sim \nicefrac{1}{6}\cdot n^4$.) Asano~\cite{A1986} determined the crossing numbers of $K_{1,3,n}$ and $K_{2,3,n}$, and Ginn and Miller~\cite{GM} gave bounds on $\crg(K_{3,3,n})$. 
{
	More recently, building upon this work, we~\cite{K22n} established the exact tripartite-circle crossing number of $K_{2,2,n}$ for every integer $n\geq 3$, as}
\begin{equation}
	\crN{3} (K_{2,2,n})=
	6\left\lfloor\frac{n}{2}\right\rfloor\left\lfloor\frac{n-1}{2}\right\rfloor+2n-3.
	\label{eq:K22n}
\end{equation}
For other crossing number results and equivalent terminology, see e.g.~\cite{Sch14}.

\subsection*{Our results.}
We prove several bounds on the tripartite-circle crossing number of complete tripartite graphs. 
\begin{theorem}\label{th:general}
	Let $m$, $n$, and $p$ be natural numbers and $\mathfrak t:=\{(m,n,p),(n,p,m),$ $(p,m,n)\}$. Then the following bounds hold:
	\begin{equation*}
		\sum_{(a,b,c)\in\mathfrak t}
		\left(
		\crN{2}(K_{a,b})
		+ \ ab
		\bigg\lfloor \frac{c-1}{2}\bigg\rfloor \bigg\lfloor \frac{c}{2}\bigg\rfloor
		\right) 
		\leq \crN{3}(K_{m,n,p})
		\leq
		\sum_{(a,b,c)\in\mathfrak t}
		\left(\binom{a}{2}\binom{b}{2}
		+ \ ab
		\bigg\lfloor \frac{c-1}{2}\bigg\rfloor\bigg\lfloor \frac{c}{2}\bigg\rfloor
		\right).
	\end{equation*}
\end{theorem}
For $m,n,p \ge 3$ we improve the lower bound by 2 in \cref{cor:improvedLower}.  Using \cref{eq:bipartitebalanced} and \cref{cor:improvedLower}, \cref{th:general} simplifies as follows for the balanced case.
Note that the lower bound of order $\sim \nicefrac{5}{4}\cdot n^4$ and the upper bound of order $\sim \nicefrac{6}{4}\cdot n^4$ are fairly close.
\begin{corollary}\label{th:balanced}
	For any integer $n \geq 3$,
	\[3n\binom{n}{3}+3n^2\bigg\lfloor \frac{n}{2}\bigg\rfloor\left\lfloor \frac{n-1}{2}\right\rfloor +2
	\leq \crN{3}(K_{n,n,n})\leq
	3\binom{n}{2}^2+3n^2\bigg\lfloor \frac{n}{2}\bigg\rfloor\left\lfloor \frac{n-1}{2}\right\rfloor.\]
\end{corollary}

Finally, $k$-partite-circle drawings of complete $k$-partite graphs easily give rise to drawings of complete graphs, known as \emph{restricted $k$-circle drawings}, by adding straight-line segments between each pair of vertices on the same circle. If the numbers of vertices on the circles are as close to equal as possible, these drawings are called \emph{balanced restricted $k$-circle drawings} of~$K_n$. The minimum number of crossings in such a drawing is denoted by $\bcrN{k}(K_n)$. Certain balanced $1$- and $2$-circle drawings of the complete graph have $H(n)$ crossings and are conjectured to be optimal 
\cite{BW,BK,AAFRS13,HH,AAFRS}. Our results imply that this phenomenon does not generalize to balanced restricted $3$-circle drawings.

\begin{restatable}{corollary}{HH}
	For $n=9,10$ and $n \geq 13$, the number of crossings in any balanced restricted 3-circle drawing of $K_n$ exceeds $H(n)$, i.e., $\bcrN{3}(K_n)>H(n)$.
\end{restatable}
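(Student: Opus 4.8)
The plan is to bound $\bcrN{3}(K_n)$ from below by separating the crossings of a balanced restricted $3$-circle drawing into two independent groups and to compare the resulting bound with the Harary--Hill value $H(n)$. Write $n = 3q + r$ with $r \in \{0,1,2\}$, so that the balanced partition places $k_1,k_2,k_3$ vertices on the three circles, where $(k_1,k_2,k_3)$ is $(q,q,q)$, $(q+1,q,q)$, or $(q+1,q+1,q)$ according to $r$. Given any balanced restricted $3$-circle drawing $D$ of $K_n$, deleting the intra-circle chords leaves a tripartite-circle drawing of $K_{k_1,k_2,k_3}$, so the number of crossings between inter-circle edges is at least $\crN{3}(K_{k_1,k_2,k_3})$. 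Independently, on each circle the $k_i$ vertices lie in convex position and are joined by straight chords; two such chords cross precisely when their endpoints alternate, so the chords on circle $i$ contribute exactly $\binom{k_i}{4}$ crossings among themselves, regardless of the cyclic order. Since a crossing in the first group involves two inter-circle edges and a crossing in the second involves two intra-circle chords, the groups are disjoint, yielding
\[
\bcrN{3}(K_n) \;\ge\; \crN{3}(K_{k_1,k_2,k_3}) + \sum_{i=1}^{3}\binom{k_i}{4}.
\]

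First I would substitute the explicit lower bounds for $\crN{3}(K_{k_1,k_2,k_3})$: in the balanced case $r=0$ this is \cref{th:balanced}, and for $r\in\{1,2\}$ it is the lower bound of \cref{th:general} refined by \cref{cor:improvedLower}, together with the closed form for $\crN{2}(K_{a,b})$ from \cref{eq:bipartiteGeneral}. After resolving all floor functions (which become fixed polynomials once $r$ is fixed), the right-hand side becomes an explicit quartic in $q$ for each residue class, and likewise $H(n)$ becomes an explicit quartic in $q$. The corollary then reduces to three polynomial inequalities, one per residue class.

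The decisive feature is the comparison of leading coefficients. For $n = 3q$ the lower bound has leading term $\tfrac{5}{4}q^4$ from the tripartite part and $3\binom{q}{4}\sim\tfrac18 q^4$ from the chords, for a total of $\tfrac{11}{8}q^4$, whereas $H(3q)\sim\tfrac{81}{64}q^4$; since $\tfrac{11}{8}=\tfrac{88}{64}>\tfrac{81}{64}$, the bound eventually dominates, and the analogous computation settles $r=1,2$. The main obstacle is that this margin is small, so the lower-order terms and the exact values of the floor functions control the threshold: I would establish each polynomial inequality for all sufficiently large $q$ via the leading-coefficient comparison, and then verify the finitely many remaining small cases by direct evaluation. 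This is precisely what isolates the stated set $\{9,10\}\cup\{n\ge 13\}$, the values $n\le 8$ and $n\in\{11,12\}$ being exactly those for which the derived lower bound does not exceed $H(n)$.
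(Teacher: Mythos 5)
Your proposal is correct and follows essentially the same route as the paper's proof: decompose $\bcrN{3}(K_n)$ into the tripartite-circle crossings (bounded below via \cref{th:general}, \cref{th:balanced}, and \cref{cor:improvedLower}) plus the fixed $\sum_i\binom{k_i}{4}$ chord crossings, compare the resulting quartics in $q$ against $H(n)$ per residue class, and settle the small cases $n=9,10,13$ by direct evaluation. The paper differs only in bookkeeping (it writes $n=3q+r$ with $r\in\{-1,0,1\}$ and carries out the explicit polynomial inequalities for $q\ge 5$), and your leading-term comparison $\tfrac{11}{8}q^4$ versus $\tfrac{81}{64}q^4$ matches its asymptotics.
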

For $n\leq 7$, balanced restricted $3$-circle drawings of $K_n$ achieve the Harary-Hill bound, and we give drawings for $K_6$ and $K_7$. For $8 \le n \le 11$, we conclude that there exist unbalanced restricted 3-circle drawings of $K_n$ with $H(n)$ crossings.

\subsection*{Organization}
The remainder of our paper is organized as follows: 
In \cref{sec:notation}, we introduce tools to count the number of the crossings, which we then use in \cref{sec:3} to prove \cref{th:general} and \cref{th:balanced}.
We discuss the connection to the Harary-Hill conjecture in \cref{sec:HH} and conclude with a list of open problems in \cref{sec:open}.

\section{Tools for counting the number of  crossings}\label{sec:notation}

A \emph{simple drawing} of $G$ is a drawing where no edge crosses itself, two edges that share a vertex do not cross, and two edges with no shared vertices intersect at most once. Drawings that minimize the number of crossings are simple, so we only consider simple drawings.
In a tripartite-circle drawing of $K_{m,n,p}$, we label the three circles $\mysc{m}$, $\mysc{n}$, and ~$\mysc{p}$, and their numbers of vertices are $m$, $n$, and $p$, respectively. Consider \cref{fig:MNP}.  
Note that this drawing can be transformed by a projective transformation of the plane such that any one circle encloses the other two.
Therefore, without loss of generality  we consider drawings where the \emph{outer} circle $\mysc{p}$ contains the \emph{inner} circles $\mysc{m}$ and $\mysc{n}$. 
In such a drawing, we label the vertices on circles $\mysc{m}$ and $\mysc{n}$ in clockwise order and the vertices on circle $\mysc{p}$ in counterclockwise order.  Likewise, we read arcs of circles in clockwise order for inner circles and in counterclockwise order for outer circles.
\begin{figure}[htb]
	\centering
	\includegraphics[page=1]{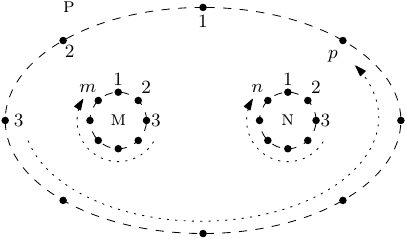}
	\caption{The vertices on the circles $\mysc{m}$ and $\mysc{n}$ are labeled clockwise; the vertices of the circle $\mysc{p}$ are labeled counterclockwise.}
	\label{fig:MNP}
\end{figure}

\subsection{Defining the $x$-labels}\label{xi}
For simplicity and without loss of generality, both papers \cite{AFS} and \cite{RT} considered simple bipartite-circle drawings of the complete bipartite graph where the two circles are assumed to be nested. Their results rely on the assignment of a vertex $\vtx{x}{i}{a}{b}$ on the outer circle $\mysc{b}$ for each vertex $i$ on the inner circle $\mysc{a}$. Because we are dealing with three circles and a pair of them is not necessarily nested, we adapt this definition as follows. 

Let $i$ be a vertex on circle $\mysc{a}$. The star formed by all edges from $i$ to $\mysc{b}$ together with circle $\mysc{b}$ partitions the plane into several disjoint regions, as shown in \cref{fig:defxi}. Exactly one of these  regions contains circle $\mysc{a}$. Such a region is enclosed by two edges from $i$ to $\mysc{b}$ and an arc on $\mysc{b}$ between two consecutive vertices. We define the second of these vertices (in clockwise or counterclockwise order depending on whether  $\mysc{b}$ is an inner or outer circle, respectively) as $\vtx{x}{i}{a}{b}$. If the two circles are clear from the context, we may also write $x_i$.

\begin{figure}[htb]
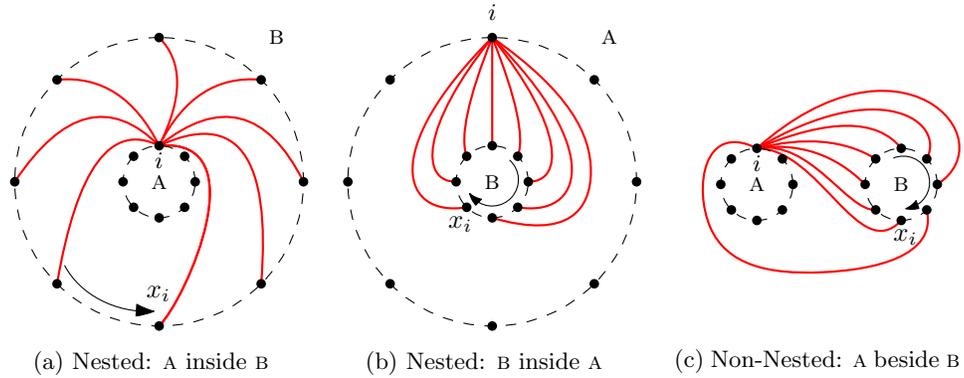

	\centering\begin{subfigure}[c]{.3\textwidth}
		\centering
		\includegraphics[page=3]{MNP}
		\subcaption{Nested: $\mysc a$ inside $\mysc b$}
	\end{subfigure}
	\hfil
	\begin{subfigure}[c]{.3\textwidth}
		\centering
		\includegraphics[page=4]{MNP}
		\subcaption{Nested: $\mysc b$ inside $\mysc a$}
	\end{subfigure}
	\hfil
	\begin{subfigure}[c]{.3\textwidth}
		\centering
		\includegraphics[page=5]{MNP}
		\subcaption{Non-Nested: $\mysc{a}$ beside $\mysc b$}
	\end{subfigure}
	\caption{Definition of vertex $\vtx{x}{i}{a}{b}$.}
	\label{fig:defxi}
\end{figure}

Abrego et al.\ \cite{AFS} observed that the $x$-labels are weakly ordered and suffice to describe the drawing up to isomorphism. Because we number the vertices on the outer circle in counterclockwise order (opposite to how it is done in  \cite{AFS}), the ordering of $x$-labels on the circle is reversed when compared to Lemma 1.4 from~\cite{AFS}. In particular, our weak ordering stated below is achieved, following the proof from~\cite{AFS}, by possibly renumbering {the} inner vertices.

\begin{lemma}[Lemma 1.4, \cite{AFS}] \label{lem:xOrder}
	Consider a simple bipartite-circle drawing of $K_{a,b}$ where the circles $\mysc{a}$ and $\mysc{b}$ have $a$ and $b$ vertices, respectively and the vertices are labeled so that {$x_1=b$}. Then it holds that 
	\[x_1\geq x_2\geq \cdots\geq x_a.\]
	Moreover, for a given sequence $(s_i)_i$ with $s_1\geq s_2\geq \cdots\geq s_a$, up to isomorphism, there is a unique simple bipartite-circle drawing of $K_{a,b}$ with
	$x_i = s_i$ for all $i \in \mysc{a}$. 
\end{lemma}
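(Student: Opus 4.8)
The plan is to reconstruct the proof of Lemma 1.4 as cited from \cite{AFS}, adapting it to the reversed orientation convention used here. Let me think carefully about what the statement actually claims and how one would establish it from first principles.

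The lemma has two parts: (1) a monotonicity statement, that with appropriate relabeling of the inner vertices the $x$-labels satisfy $x_1 \geq x_2 \geq \cdots \geq x_a$; and (2) a reconstruction/uniqueness statement, that any weakly decreasing sequence arises from a unique good drawing up to isomorphism.

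For part (1), the monotonicity. I'd think about what $x_i$ geometrically encodes. For vertex $i$ on the inner circle $\textsc{a}$, the star of edges from $i$ to the $b$ vertices on the outer circle $\textsc{b}$ partitions an annular/disk region. The label $x_i$ is the "second endpoint" of the arc-bounded region containing circle $\textsc{a}$. The key geometric fact is that if we walk around the inner circle $\textsc{a}$ in clockwise order from vertex $i$ to vertex $i+1$, the corresponding "viewing window" onto the outer circle shifts monotonically. So consecutive vertices on $\textsc{a}$ should have weakly-ordered $x$-labels. I would want to show this by a direct geometric argument: consider two adjacent inner vertices $i$ and $i+1$, look at their stars, and argue that because the vertices on both circles are in convex position and the edges cannot cross the circles, the region containing $\textsc{a}$ for vertex $i+1$ is "rotated" relative to that for vertex $i$ in a consistent direction. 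Monotonicity then follows by summing up over consecutive pairs.

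Let me think harder about this. The subtle point is the relabeling condition "so that $x_1 = n$." This is a normalization: we rotate the labeling of the inner vertices so that one particular inner vertex has the maximal possible $x$-label (which in the convention here, after reversing orientation, is $n$). Essentially, the cyclic sequence of $x$-labels is already monotone "up to cyclic rotation" because of the convexity/no-crossing-circles constraint, and by choosing the starting point appropriately we linearize it into a weakly decreasing sequence. So the real content is: the cyclic sequence $(x_i)$ has a single "descent" (modulo the cyclic structure), and we cut it there.

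For part (2), uniqueness/reconstruction. Given a weakly decreasing sequence $(s_i)$, I would construct the drawing explicitly: place $b$ vertices on the outer circle, then for each inner vertex $i$ place it in the region determined by $s_i$, and argue that the combinatorial type of the drawing (i.e., which pairs of edges cross) is completely determined by the sequence. The crossing between an edge from inner vertex $i$ and an edge from inner vertex $j$ is determined by the relative positions of their endpoints on the outer circle, which in turn are governed by the $x$-labels. Uniqueness up to isomorphism then amounts to showing that two drawings with the same $x$-label sequence have the same crossing pattern, hence are isomorphic as drawings.

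The main obstacle I expect is the careful case analysis in the non-nested configuration (\cref{fig:defxi}(c)), where circle $\textsc{a}$ sits beside circle $\textsc{b}$ rather than inside or outside it. Since this lemma is being imported from \cite{AFS} where only nested drawings were considered, the genuinely new work is checking that the definition of $x_i$ and the monotonicity argument go through verbatim in the non-nested case; the authors have set up the definition (\cref{fig:defxi}) precisely so that $x_i$ is well-defined in all three configurations, so I'd verify that the region-containing-$\textsc{a}$ is always uniquely determined and bounded by two edges and one arc, and that the monotonicity argument only uses the convex position of vertices and the no-circle-crossing property, both of which hold regardless of nesting. Since the statement is attributed directly to \cite{AFS} as Lemma 1.4, the cleanest proof would simply cite that lemma and remark that the only modification needed is reversing the order convention on the outer circle (hence the relabeling $x_1 = n$ and the reversed inequality), with the non-nested case handled identically because the defining region is intrinsic to the star and does not depend on the global nesting.
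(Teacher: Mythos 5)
Your proposal is correct and matches the paper's treatment: the paper gives no independent proof of this lemma, but simply cites Lemma 1.4 of \cite{AFS} and remarks that the only adaptation is the reversed (counterclockwise) numbering convention on the outer circle, achieved by renumbering inner vertices so that $x_1=n$ --- exactly the route you settle on in your final paragraph. Your additional sketch of the underlying geometry (cyclically monotone windows, cut at the descent, reconstruction for uniqueness) is consistent with the argument in \cite{AFS} but is not reproduced in this paper.
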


\subsection{Defining the $y$-labels}
As observed in \cref{lem:xOrder}, the $x$-labels are sufficient to describe a bipartite-circle drawing. We now aim to describe tripartite-circle drawings. We therefore introduce a new vertex assignment, $\vtx{y}{i}{a}{b}$ that depends on all three circles. See \cref{fig:defyiLP}.
Let $\mysc{a}$, $\mysc{b}$, and $\mysc{c}$ be the three circles and $i$ be a vertex on $\mysc{a}$. The star formed by all edges from $i$ to $\mysc{b}$ together with circle $\mysc{b}$ partitions the plane into disjoint regions. Exactly one of these regions contains the third circle $\mysc{c}$. This region is enclosed by two edges incident to $i$ and the arc between two consecutive vertices on $\mysc{b}$. We define the second of these two vertices (in clockwise or counterclockwise order depending on whether  $\mysc{b}$ is an inner or outer circle, respectively) as $\vtx{y}{i}{a}{b}$.

\begin{figure}[htb]
	\centering
	\begin{subfigure}[c]{.45\textwidth}
		\centering
		\includegraphics[page=6]{MNP}
		\subcaption{Nested: $\mysc a$ is inside $\mysc b$}
	\end{subfigure}\hfil
	\begin{subfigure}[c]{.45\textwidth}
		\centering
		\includegraphics[page=7]{MNP}
		\subcaption{Non-Nested: $\mysc a$ is beside $\mysc b$}
	\end{subfigure}
	\caption{Illustration of $\vtx{y}{i}{a}{b}$  for the cases when (a) $\mysc a$ is inside $\mysc b$ and (b) when $\mysc a$ is besides~$\mysc b$.}
	\label{fig:defyiLP}
\end{figure}

\subsection{Counting crossings using $x$- and $y$-labels} 
If two edges $ab$ and $cd$ cross, then at least two (nonadjacent) vertices in $\{a,b,c,d\}$ are on the same circle. Hence, there are six total types of crossings between edges $ab$ and $cd$:
\begin{center}
	\begin{tabular}{lll}
		\mysc{mp/mp}-crossings: &$a$ and $c$ lie on \mysc{m},& and $b$ and $d$ lie on \mysc{p};\\
		\mysc{np/np}-crossings: &$a$ and $c$ lie on \mysc{n},& and $b$ and $d$ lie on \mysc{p};\\
		\mysc{mn/mn}-crossings: &$a$ and $c$ lie on \mysc{m},& and $b$ and $d$ lie on \mysc{n};\\
		\mysc{mn/mp}-crossings: &$a$ and $c$ lie on \mysc{m},& $b$ lies on \mysc{n},\hspace{.1in} and $d$ lies on \mysc{p};\\
		\mysc{mn/np}-crossings: &$a$ and $c$ lie on \mysc{n},& $b$ lies on \mysc{m},\hspace{.1in} and $d$ lies on \mysc{p};\\
		\mysc{mp/np}-crossings: &$a$ and $c$ lie on \mysc{p},& $b$ lies on \mysc{m},\hspace{.1in} and $d$ lies on \mysc{n}.
	\end{tabular}
\end{center}

We typically color the edges between each pair of circles with the same color, using three different colors for the different pairs. The first three types of crossings above only involve two circles and these are called \emph{monochromatic} crossings. The last three types involve all three circles with edges of different colors. Thus, these crossings are called \emph{bichromatic} crossings. We use the $x$- and $y$-labels to count the monochromatic and bichromatic crossings, respectively. The following definitions are used throughout the rest of the paper.

For vertices $k$ and $\ell$ on a circle with $n$ vertices numbered $1,\ldots, n$ clockwise (respectively, counterclockwise), let
\begin{equation*}
	d_n(k,\ell):=\ell-k\mod n
\end{equation*}
denote the distance from $k$ to $\ell$ in clockwise (respectively, counterclockwise) order on the circle. Let $[n]:=\lbrace 1,2,\ldots,n\rbrace$. For any $u,v \in [n]$, define
$$f_n(u,v) := \binom{d_n(u,v)}{2} + \binom{n-d_n(u,v)}{2}.$$

For vertices $i$ and $j$ on the inner (respectively, outer) circle $\mysc{a}$, we use $[i,j]$ to denote the arc of $\mysc{a}$ read clockwise (respectively, counterclockwise) from $i$ to $j$. We include $i$ and $j$ in the interval $[i,j]$, whereas $(i,j)$ does not include $i$ and $j$. We similarly define $[i,j)$ and $(i,j]$.

\subsubsection{Counting crossings involving two circles}

We start by stating the following result from \cite{RT} to take care of the monochromatic crossings. 

\begin{lemma}[\cite{RT}{, Sect. 2}]\label{eq:MonoCrossings}
	The number of crossings in a simple bipartite-circle drawing of the complete bipartite graph~$K_{m,n}$ is
	\begin{equation*}
		\sum_{1\leq i<j\leq m}f_n(x_i,x_j).
	\end{equation*}
\end{lemma}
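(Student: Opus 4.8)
The plan is to decompose the total number of crossings according to which pair of inner vertices the two crossing edges emanate from, and then to evaluate each contribution in terms of the $x$-labels. In a good drawing every crossing occurs between two edges with four distinct endpoints, and since two edges sharing an endpoint never cross, a crossing between edges $ia$ and $jb$ (with $i,j$ on the inner circle $\textsc{a}$ and $a,b$ on the outer circle $\textsc{b}$) forces $i\neq j$ and $a\neq b$. Hence the crossings are partitioned by the unordered pair $\{i,j\}$ of inner vertices, and it suffices to show that the number of crossings among the edges of the two stars based at $i$ and $j$ equals $f_n(x_i,x_j)$. Summing over $1\le i<j\le m$ then yields the claimed formula.

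For a fixed pair $\{i,j\}$, I would restrict attention to the sub-drawing induced by $i$, $j$, and all $n$ outer vertices; this is a good bipartite-circle drawing of $K_{2,n}$ whose crossings are precisely the crossings between the two stars. By \cref{lem:xOrder}, this sub-drawing is determined up to isomorphism by the two labels $x_i$ and $x_j$, so its crossing count depends only on $x_i$ and $x_j$ and may be computed in any convenient realization with those labels; in particular, since any prescribed labels admit a nested realization (one circle inside the other), we may compute in the nested case. The key geometric claim is a crossing criterion: for distinct outer vertices $a$ and $b$, exactly one of the two pairs $\{ia,jb\}$, $\{ib,ja\}$ forms a crossing when $a$ and $b$ lie on a common arc of $\textsc{b}$ delimited by the two reference gaps recorded by $x_i$ and $x_j$, and neither pair crosses otherwise. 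Intuitively, $x_i$ marks the gap between consecutive outer vertices into which the star at $i$ opens, that is, the arc bounding the region containing the rest of circle $\textsc{a}$, and likewise for $x_j$; these two gaps split the outer vertices into two arcs, and two targets in a common arc are linked by the two stars exactly once.

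Granting the criterion, the count is immediate. The reference gaps associated with $x_i$ and $x_j$ separate the $n$ outer vertices into arcs of sizes $d_n(x_i,x_j)$ and $n-d_n(x_i,x_j)$, with the degenerate case $x_i=x_j$ giving one empty arc. The number of unordered pairs $\{a,b\}$ lying in a common arc is therefore $\binom{d_n(x_i,x_j)}{2}+\binom{n-d_n(x_i,x_j)}{2}=f_n(x_i,x_j)$, and each such pair contributes exactly one crossing while cross-arc pairs contribute none. This gives $f_n(x_i,x_j)$ crossings between the stars at $i$ and $j$, completing the computation after summation over all pairs.

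The main obstacle is establishing the crossing criterion rigorously. The subtlety is that the edges live in an annular region with two boundary circles, so the familiar test that endpoints must alternate around a single circle does not apply directly, and one must also handle the rotational ambiguity in the definition of the $x$-labels together with the degenerate case $x_i=x_j$. I would resolve this by passing to the nested canonical realization furnished by \cref{lem:xOrder}: cutting the annulus open along a ray and drawing each edge as a monotone segment in the resulting strip, the \emph{same arc} versus \emph{different arc} dichotomy can be read off directly from the positions of $a$ and $b$ relative to the two reference gaps, while the at-most-one-crossing property of good drawings guarantees that each common-arc pair is counted exactly once.
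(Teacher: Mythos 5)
Your proposal is correct, and it is essentially the standard argument: the paper itself gives no proof of this lemma (it is quoted from Richter--Thomassen \cite{RT}), but your per-pair decomposition into stars and the two-arc crossing criterion are exactly the scheme the paper uses to prove its bichromatic analog, \cref{yilemma1}, where pairs of outer vertices lying in a common interval delimited by the two labels contribute one crossing each and cross-interval pairs contribute none. The only caveat is that you leave the crossing criterion itself as a plan rather than a completed argument, but this is the same level of detail at which the paper's proof of \cref{yilemma1} asserts the corresponding claim, and your route through \cref{lem:xOrder} to a canonical nested realization is a sound way to discharge it.
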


\subsubsection{Counting crossings involving three circles}

The following lemma introduces a means of counting all three types of bichromatic crossings using the $y$-labels. See \cref{fig:yifigure} for a visual representation of a possible $\mysc{m}\mysc{p}/\mysc{n}\mysc{p}$-crossing.

\begin{figure}[htb]
	\centering
	\includegraphics[page=2]{MNP}
	\caption{Illustration for the case that $\mysc{(a,b,c)}=\mysc{(m,n,p)}$. Since $k'\in I_1$ and $\ell\in I_2$, edges between the vertices $i$, $j$, $k'$, and $\ell$ do not cross, but edges between the vertices $i$, $j$, $k$ and $\ell$ do cross since $k$ and $\ell$ are in the same interval~$I_2$.}
	\label{fig:yifigure}
\end{figure}

\begin{lemma}\label{yilemma1}\label{eq:BiCrossings}
	Let $\mysc{a}$, $\mysc{b}$, and $\mysc{c}$ be three disjoint circles with the disjoint vertex sets $\{1,\dots, a\}$, $\{1,\dots, b\}$, and $\{1,\dots, c\}$, respectively. Then the number of $\mysc{a}\mysc{c}/\mysc{b}\mysc{c}$-crossings is given by
	\begin{equation*}
		\sum_{\substack{1\leq i\leq a\\ 1\leq j\leq b}} f_c\big(\vtx{y}{i}{a}{c},\vtx{y}{j}{b}{c}\big).
	\end{equation*}
\end{lemma}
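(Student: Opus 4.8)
The plan is to reduce the global count to a sum of pairwise star-crossing counts and then evaluate each such count using the $y$-labels. First I would observe that every $\textsc{a}\textsc{c}/\textsc{b}\textsc{c}$-crossing is a crossing between one edge $iw$ with $i\in\textsc{a}$, $w\in\textsc{c}$, and one edge $jw'$ with $j\in\textsc{b}$, $w'\in\textsc{c}$. Hence, writing $C(i,j)$ for the number of crossings between the star of edges from $i$ to $\textsc{c}$ and the star of edges from $j$ to $\textsc{c}$, the total number of $\textsc{a}\textsc{c}/\textsc{b}\textsc{c}$-crossings equals $\sum_{i,j} C(i,j)$. It therefore suffices to prove the pointwise identity $C(i,j)=f_c\big(\vtx{y}{i}{a}{c},\vtx{y}{j}{b}{c}\big)$ for each fixed pair $(i,j)$ and then sum; note the parallel with \cref{eq:MonoCrossings}, where the same bookkeeping over pairs of sources on a \emph{common} circle is carried out with the $x$-labels in place of the $y$-labels.

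To evaluate $C(i,j)$, I would reorganize the count over unordered pairs $\{w,w'\}$ of vertices of $\textsc{c}$. For a fixed such pair, the only two edge pairs that can cross are $\{iw,jw'\}$ and $\{iw',jw\}$, since the remaining pairings share an endpoint on $\textsc{c}$ or share a star apex. Because four points in the plane admit at most one crossing perfect matching, at most one of these two edge pairs actually crosses; thus $C(i,j)$ equals the number of unordered pairs $\{w,w'\}$ that contribute a crossing. The heart of the argument is the geometric criterion identifying these pairs. Writing $d:=d_c\big(\vtx{y}{i}{a}{c},\vtx{y}{j}{b}{c}\big)$, recall that by the definition of the $y$-labels (see \cref{fig:defyiLP}) the region of the $i$-star containing the third circle---hence containing $j$---opens onto $\textsc{c}$ through the gap between $\vtx{y}{i}{a}{c}-1$ and $\vtx{y}{i}{a}{c}$, and symmetrically for $j$; these two openings split the $c$ vertices of $\textsc{c}$ into the arc $[\vtx{y}{i}{a}{c},\vtx{y}{j}{b}{c})$, carrying $d$ vertices, and its complement, carrying $c-d$ vertices. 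I would then show that $\{w,w'\}$ contributes a crossing if and only if $w$ and $w'$ lie on a common one of these two arcs.

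Granting the criterion, the computation is immediate: the number of same-arc pairs is $\binom{d}{2}+\binom{c-d}{2}=f_c\big(\vtx{y}{i}{a}{c},\vtx{y}{j}{b}{c}\big)$, exactly the claimed summand. To prove the criterion, I would trace the edge $jw'$ starting from $j$, which lies in the region $R_i$ of the $i$-star bounded by the edges to $\vtx{y}{i}{a}{c}-1$ and $\vtx{y}{i}{a}{c}$ and the arc between them; the number of $i$-edges that $jw'$ must cross to reach $w'$ is controlled by the position of $w'$ relative to this opening, and a short planarity argument isolates the same-arc case as the one producing a crossing. In the representative configuration where $\textsc{c}$ is the outer circle and both $i,j$ lie inside it, this reduces to the transparent fact that the chords $iw$ and $jw'$ cross precisely when $w$ and $w'$ lie on the same side of the line through $i$ and $j$ (a line meeting $\textsc{c}$ in two points, one in each opening), which one checks directly from the cyclic order of $i,w,j,w'$.

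The main obstacle I anticipate is making the geometric criterion uniform across all relative positions of the three circles. When $\textsc{c}$ is an inner circle, one source circle lies beside $\textsc{c}$ while the other surrounds it, so both apexes are \emph{outside} $\textsc{c}$ and the clean ``same side of the line $ij$'' picture no longer applies verbatim; there the opening-based formulation must be used instead. Here the clockwise/counterclockwise conventions built into the definition of $\vtx{y}{i}{a}{c}$ (clockwise when $\textsc{c}$ is inner, counterclockwise when $\textsc{c}$ is outer) are exactly what keep the two openings correctly oriented, so that the same-arc count continues to equal $f_c$. I would organize this verification as a short case analysis according to whether each apex lies inside, outside, or beside $\textsc{c}$, confirming in every case that the set of crossing pairs is precisely the same-arc set.
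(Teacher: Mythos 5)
Your proposal is correct and follows essentially the same approach as the paper's proof: the paper likewise fixes a pair $(i,j)$, partitions the vertices of $\textsc{c}$ into the two arcs determined by $\vtx{y}{i}{a}{c}$ and $\vtx{y}{j}{b}{c}$, asserts that a pair $\{w,w'\}$ of vertices of $\textsc{c}$ produces exactly one crossing when both lie in the same arc and none otherwise, and sums $\binom{d}{2}+\binom{c-d}{2}=f_c\big(\vtx{y}{i}{a}{c},\vtx{y}{j}{b}{c}\big)$ over all pairs. The only differences are matters of detail: the paper asserts the same-arc criterion by appeal to a figure where you sketch a planarity argument for it, and its stated arcs $[\vtx{y}{i}{a}{c}+1,\vtx{y}{j}{b}{c}]$ and $[\vtx{y}{j}{b}{c}+1,\vtx{y}{i}{a}{c}]$ differ from yours by a harmless endpoint shift that does not affect the count, since only the arc sizes enter $f_c$.
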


\begin{proof}
	Fix a vertex $i$ on $\mysc{a}$ and a vertex $j$ on $\mysc{b}$ and consider the corresponding vertices $\vtx{y}{i}{a}{c}$ and $\vtx{y}{j}{b}{c}$ on circle $\mysc{c}$, see \cref{fig:yifigure}.
	For every pair of distinct vertices $k$ and $\ell$ both in the interval $[\vtx{y}{i}{a}{c}, \vtx{y}{j}{b}{c})=:I_1$ on $\mysc{c}$ there is exactly one crossing among edges $ik$ and $i\ell$, and $jk$ and $j\ell$. Similarly, there is exactly one crossing among the edges $ik$ and $i\ell$, and edges $jk$ and $j\ell$ when $k$ and $\ell$ are in $[\vtx{y}{j}{b}{c}, \vtx{y}{i}{a}{c})=:I_2$. Moreover note that if a vertex $k$ is in $I_1$ and a vertex $\ell$ is in $I_2$ then there are no crossings among edges $ik$, $i\ell$, $jk$ and $j\ell$. Consequently,  there are exactly $f_c\big(\vtx{y}{i}{a}{c},\vtx{y}{j}{b}{c}\big)$ crossings among edges incident with vertices $i$ and $j$. Therefore the total number of $\mysc{a}\mysc{c}/\mysc{b}\mysc{c}$-crossings is as claimed.
\end{proof}

\subsubsection{Total crossing count}
The number of crossings in a simple tripartite-circle drawing of  
$K_{m,n,p}$ can be found by counting the crossings in the three different, simple bipartite-circle drawings of $K_{m,n}$, $K_{m,p}$ and $K_{n,p}$, along with crossings involving all three circles. 
Therefore, we say
a \emph{cyclic assignment} of \mysc{(a,b,c)} to \mysc{(m,n,p)} is one triple in the  set $\mathfrak t:= \{\mysc{(m,n,p)}, \mysc{(n,p,m)}, \mysc{(p,m,n)}\}$, with 
the number of vertices on the circles \mysc{a, b}, and \mysc{c} denoted by $a,~b$, and $c$, respectively.  
\begin{theorem}\label{th:yitheorem}
	The number of crossings in a simple tripartite-circle drawing of~ $K_{m,n,p}$ is given by 
	\[
	\sum_{\mysc{(a,b,c)}\in \mathfrak t} \left(
	\sum_{\substack{i < j\\i,j \in \mysc{A}}} f_b\big(\vtx{x}{i}{a}{b},\vtx{x}{j}{a}{b}\big)
	+\sum_{\substack{i \in \mysc{A}\\j \in \mysc{B}}} f_c\big(\vtx{y}{i}{a}{c},\vtx{y}{j}{b}{c}\big)
	\right).
	\]
\end{theorem}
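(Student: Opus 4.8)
The plan is to partition the crossings of a good tripartite-circle drawing according to the six types identified above---the three monochromatic types ($\textsc{mn/mn}$, $\textsc{np/np}$, $\textsc{mp/mp}$) and the three bichromatic types ($\textsc{mp/np}$, $\textsc{mn/mp}$, $\textsc{mn/np}$)---to count each type separately, and then to recognize the resulting six summands as exactly the six terms produced by letting $\textsc{(a,b,c)}$ range over the cyclic assignments in $\mathfrak t$.

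For the monochromatic crossings, I would first observe that a crossing of type $\textsc{ab/ab}$ involves only the two circles $\textsc{a}$ and $\textsc{b}$. Deleting the third circle and all its incident edges leaves a good bipartite-circle drawing of $K_{a,b}$: the vertices on each of the two remaining circles still form independent sets and the edges still avoid the circles. Hence \cref{eq:MonoCrossings} applies and counts these crossings as $\sum_{i<j,\ i,j\in\textsc{a}} f_b(\vtx{x}{i}{a}{b},\vtx{x}{j}{a}{b})$, matching the first inner summand. Letting $\textsc{(a,b,c)}$ run over $\mathfrak t$ makes the ordered pair $\textsc{(a,b)}$ run over $\textsc{(m,n)}$, $\textsc{(n,p)}$, $\textsc{(p,m)}$, i.e., over each unordered pair of circles exactly once, so the three monochromatic types are each counted once.

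For the bichromatic crossings, a crossing of type $\textsc{ac/bc}$ has its two shared-circle endpoints on $\textsc{c}$ and its distinct endpoints on $\textsc{a}$ and $\textsc{b}$; this is precisely the configuration counted by \cref{yilemma1}, which gives $\sum_{i\in\textsc{a},\,j\in\textsc{b}} f_c(\vtx{y}{i}{a}{c},\vtx{y}{j}{b}{c})$, the second inner summand. As $\textsc{(a,b,c)}$ ranges over $\mathfrak t$, the pair of spoke circles $\textsc{(a,b)}$ together with the common circle $\textsc{c}$ realizes each of the three bichromatic types $\textsc{mp/np}$, $\textsc{mn/mp}$, $\textsc{mn/np}$ exactly once. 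Summing the monochromatic and bichromatic contributions over $\mathfrak t$ then yields the claimed total.

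The routine part is the two applications of the lemmas; the only real care needed is the bookkeeping---verifying that the cyclic set $\mathfrak t$ enumerates each of the six crossing types once and only once (with the correct circle playing the role of $\textsc{b}$ in $f_b$ and of $\textsc{c}$ in $f_c$), and confirming that restricting a tripartite-circle drawing to two of its circles genuinely produces a good bipartite-circle drawing so that \cref{eq:MonoCrossings} is applicable. I expect this cyclic matching to be the main (though modest) obstacle, since it is where an off-by-one in the assignment or a mismatch between which circle indexes $f$ would introduce an error.
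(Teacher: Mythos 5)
Your proposal is correct and follows essentially the same route as the paper's own proof: the paper likewise splits the crossings into monochromatic and bichromatic types and invokes \cref{eq:MonoCrossings} and \cref{yilemma1} respectively, summing over the cyclic assignments in $\mathfrak t$. Your additional bookkeeping (checking that $\mathfrak t$ hits each of the six crossing types exactly once, and that restricting to two circles yields a good bipartite-circle drawing) is exactly the detail the paper leaves implicit, so there is nothing to add.
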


\begin{proof}
	The monochromatic crossings are counted by the first expressions in the brackets using \cref{eq:MonoCrossings}. The second expression corresponds to the bichromatic crossings using \cref{yilemma1}.
\end{proof}

\section{Bounding the tripartite-circle crossing number---\\Proofs of Theorem \ref{th:general} and Corollary \ref{th:balanced}}\label{sec:3}
In this section, we prove the upper and lower bounds of \cref{th:general,th:balanced}. We start with the lower bounds and then proceed with the upper bounds.
\subsection{Lower bounds}
To prove the lower bounds, we start with two lemmas.

\begin{lemma}\label{le:min}
	The function $f_n(a,b)$ attains its minimum $M$ if and only if $|a-b| \in \{\floor*{n/2}, \ceil*{n/2}\}$. Among pairs $(a,b)$ such that $|a-b| \notin \{\floor*{n/2}, \ceil*{n/2}\}$, the minimum of $f_n$ exceeds $M$ by $1$ if $n$ is even and by $2$ if $n$ is odd.
\end{lemma}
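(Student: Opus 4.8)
The plan is to analyze the function $f_n(a,b) = \binom{d}{2} + \binom{n-d}{2}$ purely as a function of the single variable $d := d_n(a,b) \in \{0,1,\ldots,n-1\}$, since $f_n$ depends on $a$ and $b$ only through their circular distance. Writing $g(d) := \binom{d}{2} + \binom{n-d}{2}$, I first observe that $g$ is symmetric about $d = n/2$, because swapping $d \leftrightarrow n-d$ leaves the two binomial terms exchanged but their sum unchanged. This symmetry immediately tells me the minimizer should sit at the center of the range, i.e.\ near $d = n/2$.

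First I would establish convexity. Expanding, $g(d) = \tfrac12\big(d(d-1) + (n-d)(n-d-1)\big) = d^2 - nd + \tfrac12 n(n-1)$, a quadratic in $d$ with positive leading coefficient, hence strictly convex with its real vertex at $d = n/2$. Since $g$ is a strictly convex quadratic, its minimum over the integers is attained exactly at the integer(s) nearest $n/2$: when $n$ is even this is the single value $d = n/2$, and when $n$ is odd these are the two values $d = (n-1)/2$ and $d = (n+1)/2 = \lfloor n/2\rfloor$ and $\lceil n/2 \rceil$. Translating back through $d = d_n(a,b)$ and noting $|a-b| \equiv \pm d_n(a,b)$, the minimizing condition is exactly $|a-b| \in \{\lfloor n/2\rfloor, \lceil n/2\rceil\}$, giving the value $M$. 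This handles the first sentence.

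Next I would compute the gap to the second-smallest value, i.e.\ evaluate $g$ at the integer $d$ one step away from the minimizer and subtract $M$. Using $g(d) = d^2 - nd + \tfrac12 n(n-1)$ the increments are easy: for $n$ even, the minimizer is $d_0 = n/2$ and the nearest competitors are $d_0 \pm 1$; by the quadratic formula $g(d_0 \pm 1) - g(d_0) = (d_0\pm 1)^2 - d_0^2 \mp n = 1$, so the excess is exactly $1$. For $n$ odd, the two minimizers $d_0 = (n-1)/2$ and $d_0' = (n+1)/2$ are symmetric about $n/2$; the next-closest integers are $d_0 - 1$ and $d_0' + 1$, and a direct substitution gives $g(d_0 - 1) - g(d_0) = 2$ (and symmetrically on the other side). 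This yields the stated excess of $1$ for even $n$ and $2$ for odd $n$.

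I do not anticipate a serious obstacle here; the whole argument reduces to the convexity of a single-variable quadratic, and the only real care needed is bookkeeping in translating between the circular distance $d_n(a,b)$, the minimizing set $\{\lfloor n/2\rfloor, \lceil n/2\rceil\}$, and the quantity $|a-b|$ appearing in the statement (one must note that $d_n(a,b)$ and $d_n(b,a) = n - d_n(a,b)$ both yield the same value of $g$, so the condition can be phrased symmetrically in terms of $|a-b|$ modulo the wraparound). The mildly delicate point is simply confirming that the \emph{next} integer value is genuinely the global second-smallest, which follows because $g$ is strictly increasing as $d$ moves away from $n/2$ in either direction, so no farther integer can undercut the immediate neighbor. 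I would state this monotonicity explicitly to close the argument cleanly.
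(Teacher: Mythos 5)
Your proposal is correct and follows essentially the same route as the paper's proof: both reduce $f_n$ to a single-variable quadratic $g(d)=d^2-nd+\tfrac12 n(n-1)$ in the circular distance, use its symmetry and strict convexity about $n/2$ to place the integer minimizers at $\lfloor n/2\rfloor$ and $\lceil n/2\rceil$ (equivalently $|a-b|\in\{\lfloor n/2\rfloor,\lceil n/2\rceil\}$), and then evaluate the increment at the adjacent integers to obtain the excess of $1$ for even $n$ and $2$ for odd $n$. The only cosmetic difference is that the paper completes the square, writing $g_n(x)=(x-\tfrac n2)^2+\tfrac{n^2-2n}{4}$, while you expand the binomials directly; the content is identical.
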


\begin{proof}
	First note that $d_n(a,b) = \floor*{n/2}$ if and only if $|a-b| \in \{\floor*{n/2}, \ceil*{n/2}\}$ because $n = \floor*{n/2} + \ceil*{n/2}$.
	
	Consider the auxiliary {real-valued} function $g_n(x) = \binom{x}{2} + \binom{n-x}{2} = (x-\frac{n}{2})^2+\frac{n^2-2n}{4}$. It is a quadratic function minimized at $\frac{n}{2}$, symmetric about $x=\frac{n}{2}$, decreasing for $x < \frac{n}{2}$, and increasing for $x > \frac{n}{2}$. 
	Consequently, the minimum value of $g_n$ 
	over the integers is attained at $x \in \{\floor*{n/2}, \ceil*{n/2}\}$; the next smallest value of $g_n$ over the integers is attained at $x \in \{\floor*{n/2}-1, \ceil*{n/2}+1\}$.
	Finally, the claim follows by a computation showing that
	\begin{align*}g_n(\floor*{n/2}-1) - g_n(\floor*{n/2})
		&= \begin{cases}2 & \text{if $n$ is odd,}\\
			1 & \text{if $n$ is even.}\end{cases}\end{align*}
	This concludes the proof.
\end{proof}

For every cyclic assignment of $\mysc{(a,b,c})$ to $\mysc{(m,n,p})$, we denote the minimum number of \mysc{a}\mysc{b}/\mysc{a}\mysc{c}-crossings among all simple tripartite-circle drawings of~$K_{m,n,p}$ by $\crN{3}^\mysc{a}(K_{m,n,p})$.

\begin{lemma}\label{le:mnpp}
	For every cyclic assignment of $\mysc{(a,b,c})$ to $\mysc{(m,n,p})$, 
	$$\crN{3}^{\mysc{c}}(K_{m,n,p}) \geq ab \left\lfloor \frac{c}{2}\right\rfloor\bigg\lfloor \frac{c-1}{2}\bigg\rfloor .$$
\end{lemma}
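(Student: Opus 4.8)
The plan is to bound the $\textsc{c}$-crossings (the $\textsc{a}\textsc{c}/\textsc{b}\textsc{c}$-crossings) from below using the formula from \cref{eq:BiCrossings}, namely
\[
\sum_{\substack{i \in \textsc{A}\\ j \in \textsc{B}}} f_c\big(\vtx{y}{i}{a}{c},\vtx{y}{j}{b}{c}\big).
\]
Since this sum has exactly $ab$ terms, one for each pair $(i,j)$ with $i \in \textsc{A}$ and $j \in \textsc{B}$, the strategy is simply to bound each term below by the global minimum value $M$ of $f_c$ identified in \cref{le:min}. By that lemma, $f_c(u,v) \ge M$ for all integer inputs, where $M = f_c(u,v)$ whenever $|u-v| \in \{\floor*{c/2},\ceil*{c/2}\}$, i.e. $d_c(u,v) = \floor*{c/2}$.

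The key computation is then to evaluate $M$ explicitly. Taking $d_c = \floor*{c/2}$ gives $c - d_c = \ceil*{c/2}$, so
\[
M = \binom{\floor*{c/2}}{2} + \binom{\ceil*{c/2}}{2}.
\]
I would verify by a short case check on the parity of $c$ that this equals $\floor*{c/2}\floor*{(c-1)/2}$. If $c$ is even, $\floor*{c/2} = \ceil*{c/2} = c/2$ and the expression becomes $2\binom{c/2}{2} = \frac{c}{2}\cdot\frac{c-2}{2} = \floor*{c/2}\floor*{(c-1)/2}$. If $c$ is odd, $\floor*{c/2} = \frac{c-1}{2}$ and $\ceil*{c/2} = \frac{c+1}{2}$, and one checks that $\binom{(c-1)/2}{2} + \binom{(c+1)/2}{2} = \left(\frac{c-1}{2}\right)^2 = \floor*{c/2}\floor*{(c-1)/2}$ as well.

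Combining these, each of the $ab$ terms is at least $M = \floor*{c/2}\floor*{(c-1)/2}$, so the total number of $\textsc{a}\textsc{c}/\textsc{b}\textsc{c}$-crossings is at least $ab\,\floor*{c/2}\floor*{(c-1)/2}$. Since this holds for every good tripartite-circle drawing, it holds for the minimizing drawing, giving the claimed bound on $\crN{3}^{\textsc{c}}(K_{m,n,p})$. I expect no serious obstacle here: the argument is a termwise lower bound plus an arithmetic identity, and the only point requiring care is confirming the closed form of $M$ across both parities of $c$. The more subtle issue — whether the per-pair minima can be simultaneously realized by an actual drawing — is not needed for the \emph{lower} bound, since we only assert that each term individually is at least $M$.
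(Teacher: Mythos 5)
Your proof is correct and follows essentially the same route as the paper: both bound each of the $ab$ terms in the bichromatic crossing count of Lemma~\ref{yilemma1} below by the minimum of $f_c$ from Lemma~\ref{le:min}, then verify the identity $\binom{\lfloor c/2\rfloor}{2}+\binom{\lceil c/2\rceil}{2}=\lfloor c/2\rfloor\lfloor (c-1)/2\rfloor$. Your parity case check and your remark that simultaneous realizability is irrelevant for a lower bound are both accurate, so there is nothing to fix.
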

\begin{proof}
	From the count  in Lemma \ref{yilemma1}, and its minimization in  Lemma \ref{le:min}, we have the lower bound
	\begin{align*}
		\sum_{\substack{1\leq i\leq a\\ 1\leq j\leq b}} f_c\big(\vtx{y}{i}{a}{c},\vtx{y}{j}{b}{c}\big) 
		&\geq\sum_{\substack{1\leq i\leq a\\ 1\leq j\leq b}}
		\binom{\lfloor c/2\rfloor}{2}+
		\binom{\lceil c/2\rceil}{2}
		=ab\left[
		\binom{\lfloor c/2\rfloor}{2}+
		\binom{\lceil c/2\rceil}{2}
		\right]
	\end{align*}
	Adding the last two terms directly yields the claim. 
\end{proof}

This allows us to find a lower bound on $\crN{3}(K_{m,n,p})$. 
\begin{proof}[Proof of the lower bound of \autoref{th:general}]
	In \cref{th:yitheorem} we count the total number of crossings in a simple tripartite-circle drawing of $K_{m,n,p}$. 
	\begin{equation*}
		\crN{3}(K_{m,n,p})\geq \crN{2}(K_{m,n})+\crN{2}(K_{m,p})+\crN{2}(K_{n,p})
		+\crN{3}^{\mysc{m}}(K_{m,n,p})+\crN{3}^{\mysc{n}}(K_{m,n,p})+\crN{3}^{\mysc{p}}(K_{m,n,p})
	\end{equation*}
	\cref{le:mnpp} gives a lower bound on the last three summands and directly implies the claimed lower bound:
	\[ \crN{3}(K_{m,n,p})\geq 
	\sum_{\mysc{(a,b,c)}\in \mathfrak t}
	\left( \crN{2}(K_{a,b})
	+ \ ab\bigg\lfloor \frac{c}{2}\bigg\rfloor
	\bigg\lfloor \frac{c-1}{2}\bigg\rfloor
	\right). \qedhere\]
\end{proof}

\subsubsection{Improving the lower bound}
With the help of the following lemmas, the lower bound can be improved by 2. {This improvement is relevant in the analysis of small graphs and the connection to the Harary-Hill conjecture. It is also used in \cite{K22n} to settle the tripartite-circle crossing number of $K_{2,2,n}$.}

\begin{lemma}[Special Inversion Lemma]\label{lem:SPECIALinversion}
	Fix the placement of two circles $\mysc{a}$ and $\mysc{b}$ inside circle $\mysc{c}$. For a tripartite-circle drawing $D$, $\vtx xiac = \vtx yiac$ for all $i$ on $\mysc a$ if and only if $\vtx yjca = \vtx ykca$ for all $j,k$ on $\mysc c$.
\end{lemma}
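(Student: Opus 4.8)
The plan is to show that both sides of the claimed equivalence are characterizations of one and the same geometric condition on the relative placement of the three circles, which I will call condition~$(\star)$: \emph{circle \textsc{b} lies in the region adjacent to circle \textsc{a} in the arrangement formed by circle \textsc{c} together with all edges of the complete bipartite drawing between \textsc{a} and \textsc{c}}; equivalently, \textsc{b} is not separated from \textsc{a} by any \textsc{a}--\textsc{c} edge. Before starting I would note that in any valid tripartite-circle drawing the inner circles \textsc{a} and \textsc{b} are non-nested (if \textsc{b} were inside the disk bounded by \textsc{a}, an edge from \textsc{b} to \textsc{c} could not avoid \textsc{a}), so \textsc{b} sits in the annulus between \textsc{a} and \textsc{c}; I may assume this throughout.

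First I would prove that the left-hand condition is equivalent to $(\star)$. For a fixed $i$ on \textsc{a}, the labels $\vtx{x}{i}{a}{c}$ and $\vtx{y}{i}{a}{c}$ are by definition read off from the region of the single star from $i$ to \textsc{c} (together with \textsc{c}) that contains \textsc{a}, respectively \textsc{b}; hence $\vtx{x}{i}{a}{c}=\vtx{y}{i}{a}{c}$ exactly when \textsc{b} lies in the region $S_i$ containing \textsc{a}. The geometric input is that $S_i$ is bounded by the two extreme edges from $i$ and an arc of \textsc{c}, and that it hugs the entire exterior of \textsc{a} except at $i$ itself. Thus the left-hand condition says $\textsc{b}\in\bigcap_i S_i$, and I would argue that this intersection is precisely the thin region adjacent to \textsc{a} in the full \textsc{a}--\textsc{c} arrangement: a point pressed against \textsc{a} closer than every edge lies in every $S_i$, whereas a point separated from \textsc{a} by some edge leaves $S_i$ for some $i$, since \textsc{b} cannot lie in the ``behind'' sector of every $i$ simultaneously. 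This yields the equivalence of the left-hand condition with~$(\star)$.

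Next I would prove that the right-hand condition is equivalent to $(\star)$. If $(\star)$ holds, then \textsc{b} hugs a unique arc $(w^*-1,w^*)$ of \textsc{a}, the arc being unique because the edges emanating from any vertex of \textsc{a} pin \textsc{b} to one side of that vertex; then for every $j$ on \textsc{c} the region of the star from $j$ to \textsc{a} containing \textsc{b} is bounded by exactly this arc, so $\vtx{y}{j}{c}{a}=w^*$ for all $j$, which is the right-hand condition. Conversely, if $\vtx{y}{j}{c}{a}=w^*$ for all $j$, then for every $j$ the circle \textsc{b} is unseparated from the arc $(w^*-1,w^*)$ by the edges of $j$'s star; ranging over all $j$ on \textsc{c} exhausts every \textsc{a}--\textsc{c} edge, so \textsc{b} is unseparated from that arc by any edge, which is exactly~$(\star)$. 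Combining the two equivalences proves the lemma.

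The hard part will be the topological bookkeeping in the first equivalence: because an edge from $i$ to a far vertex of \textsc{c} must wind around \textsc{a} inside the annulus, I must justify carefully that $S_i$ really hugs all of \textsc{a}, that $\bigcap_i S_i$ collapses to the single adjacency region, and that \textsc{b} cannot sit in the behind-sector of every $i$ at once. I expect the cleanest route is to fix the canonical representative of the drawing guaranteed by \cref{lem:xOrder}, drawing each edge as a geodesic in the annulus between \textsc{a} and \textsc{c}; this makes the region structure explicit and reduces each claim to a short planarity argument. The well-definedness of $w^*$ and its independence of $j$ is the analogous, but milder, obstacle on the right-hand side.
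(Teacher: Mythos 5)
Your architecture coincides with the paper's: both proofs pivot through your condition $(\star)$ (circle \textsc{b} lies in a face of the \textsc{a}--\textsc{c} subdrawing $D^*$ adjacent to \textsc{a}), establishing that the left-hand side is equivalent to $(\star)$ and that the right-hand side is equivalent to $(\star)$. The paper's opening ``observation'' is exactly your first equivalence, and the two halves of its main argument are the two directions of your second equivalence, with the implication $(\star)\Rightarrow$ RHS carried out via the triangles $T_j=(j,q,q+1)$, matching your ``\textsc{b} hugs a unique arc'' step.

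There is, however, a genuine gap in your converse direction RHS $\Rightarrow(\star)$, and it sits precisely where the paper does its real work. You argue: for each $j$ on \textsc{c}, circle \textsc{b} is unseparated from the arc $(w^*-1,w^*)$ by the edges of $j$'s star, and since ranging over $j$ exhausts every \textsc{a}--\textsc{c} edge, \textsc{b} is unseparated from that arc by any edge. This treats separation as an edge-by-edge (star-by-star) property, which it is not: \textsc{b} lying in the face adjacent to the arc within each coarse arrangement (star of $j$)${}\cup{}$\textsc{a} \emph{individually} does not formally imply that \textsc{b} lies in a face of the common refinement $D^*$ adjacent to that arc. In general topological position this implication is false: finitely many Jordan domains can all contain a common boundary arc $\alpha$ and all contain the point \textsc{b}, while the component of \textsc{b} in their intersection is disjoint from the collar of $\alpha$ (two interleaved U-shaped domains over $\alpha$ already do this). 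Ruling this out requires structure special to good drawings --- for instance that the bounding edge-pairs from distinct vertices $j,j'$ of \textsc{c} cross pairwise at most twice, since edges sharing an endpoint do not cross --- and the paper supplies exactly this missing step: it takes, for each $j$, the smallest triangle containing \textsc{b}, observes that these triangles all share the side $\{q,q+1\}$, and argues that their intersection is a single face of $D^*$ containing \textsc{b} and adjacent to \textsc{a}. Your first equivalence harbors a milder instance of the same issue (why $\bigcap_i S_i$ meets \textsc{b} only in faces adjacent to \textsc{a}), but there you flag it as the hard part and propose the canonical representative of \cref{lem:xOrder} as the tool; for the converse you present the flawed inference as if complete, calling it the ``milder obstacle.'' To repair the plan, replace ``exhausts every edge'' by an intersection argument in the spirit of the paper's, or extend your canonical-form verification to this direction as well, noting that the homeomorphism provided by \cref{lem:xOrder} preserves which face of $D^*$ the circle \textsc{b} occupies.
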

\begin{proof}
	Let $D^*$ be the restriction of $D$ to edges between $\mysc a$ and $\mysc c$.
	We refer to the connected components of the complement of $D^*$ as {\it faces}.
	We start with an observation.
	By the definitions of $\vtx xiac$ and $\vtx yiac$, we have $\vtx xiac = \vtx yiac$ if and only if the pair of incident edges $\{i,\vtx xiac\}$ and $\{i,\vtx xiac+1\}$ divides the interior of $\mysc{c}$ into two parts where $\mysc{a}$ and $\mysc{b}$ lie in the same part and all other edges from $i$ lie in the other part. This holds for all $i$ if and only if no edge of $D^*$ separates $\mysc{a}$ from $\mysc{b}$; in other words, the circle $\mysc{b}$ lies in a face $F$ of $D^*$ adjacent to $\mysc{a}$.
	\begin{figure}[htb]
		\centering
		\includegraphics[page=1]{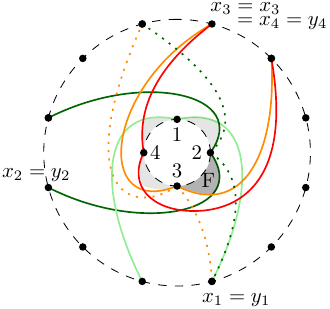}
		\caption{Depiction of special $y$-inversion. }
		\label{fig:Yinversion}
	\end{figure}
	\cref{fig:Yinversion} shows an example with $\vtx xiac = \vtx yiac$ for all $i$ and with the faces adjacent to $\mysc{a}$ shaded in gray. Circle $\mysc{b}$ lies in one of these faces, say $F$. This finishes our observation.
	
	Now we prove the lemma. Suppose $\vtx xiac=\vtx yiac$ for all $i$ on $\mysc{a}$. By the above observation, the circle $\mysc{b}$ lies in a face $F$ of $D^*$ adjacent to $\mysc{a}$. Let $q$ and $q+1$ be the two vertices on $\mysc{a}$ and on the boundary of $F$, where $q+1$ comes clockwise directly after $q$. For each $j$ on $\mysc{c}$, the triangle $T_j = (j,q,q+1)$ cannot cross $F$, since it is a face, and shares the side $\{q,q+1\}$ with $F$. By the properties of simple drawings, either $F$ is in the interior of $T_j$ and all other edges from $j$ are on the exterior, or $F$ is on the exterior of $T_j$ and all other edges from $j$ are in the interior of $T_j$. In either case, $\vtx yjca = q+1$ for all $j$ on $\mysc{c}$.
	
	For the converse, suppose $\vtx yjca = \vtx ykca = q+1$ for all $j,k$ on $\mysc{c}$ and some $q$ on $\mysc{a}$. Then for each $j$ on $\mysc{c}$, the triangle $T_j = (j,q,q+1)$ is the smallest triangle (by containment) from $j$ to $\mysc{a}$ that encloses $\mysc{b}$. By the minimality of $T_j$, no edge from $j$ in $D^*$ can cross the interior of $T_j$. The intersection of these triangles over all $j$ on $\mysc{c}$ is a face $F$ of $D^*$ containing $\mysc{b}$ and adjacent to $\mysc{a}$ at $\{q,q+1\}$. By the observation, $\vtx xiac = \vtx yiac$ for all $i$ on $\mysc{a}$.
\end{proof}

For vertices $x$ and $y$ on a circle with $n$ vertices, we define $$\mind_n(x,y):=\min\{d_n(x,y), d_n(y,x)\}.$$
The following lemma analyzes the situation when a term counting bichromatic crossings is minimized.
\begin{lemma}\label{le:ycluster}
	Consider a tripartite-circle drawing
	with circles \mysc{a}, \mysc{b}, and \mysc{c} with $c$ vertices on \mysc{c}. If 
	{$f_c(\vtx yiac,\vtx yjbc)=\min_{(u,v)\in [c]^2} f_c(u,v)$} for every $i \in \mysc{a}$ and $j \in \mysc{b}$, then there are vertices $u_\mysc{a}, u_\mysc{b}\in \mysc c$ with the following properties:
	\begin{enumerate}
		\item For both circles $\mysc d \in \{\mysc{a,b}\}$, $\vtx yidc \in \{u_\mysc{d}, u_\mysc{d}+1\}$ for all $i \in \mysc d$. 
		\item For some circle $\mysc d \in \{\mysc{a,b}\}$, $\vtx yidc = u_\mysc{d}$ for all $i \in \mysc{d}$. If $c$ is even, then for both circles $\mysc d \in \{\mysc{a,b}\}$, $\vtx yidc = u_\mysc{d}$ for all $i \in \mysc{d}$.
	\end{enumerate}
\end{lemma}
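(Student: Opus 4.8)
The plan is to convert the minimization hypothesis into a congruence condition on the $y$-labels via \cref{le:min} and then to argue purely by elementary modular arithmetic; once the $y$-labels are fixed no geometric information about the drawing is needed. Write $h:=\lfloor c/2\rfloor$ and abbreviate $\alpha_i:=\vtx{y}{i}{a}{c}$ for $i\in\textsc{a}$ and $\beta_j:=\vtx{y}{j}{b}{c}$ for $j\in\textsc{b}$. By \cref{le:min}, $f_c(\alpha_i,\beta_j)$ attains its minimum exactly when $\mind_c(\alpha_i,\beta_j)=h$, that is, when $\beta_j-\alpha_i\equiv\lfloor c/2\rfloor$ or $\lceil c/2\rceil\pmod c$. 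Because $\lfloor c/2\rfloor=\lceil c/2\rceil=c/2$ for even $c$, while $\lceil c/2\rceil=h+1$ for odd $c$, the assumption that every term is minimized becomes
\[
\beta_j-\alpha_i\equiv\tfrac{c}{2}\pmod c\quad(c\text{ even}),\qquad
\beta_j-\alpha_i\in\{h,h+1\}\pmod c\quad(c\text{ odd}),
\]
for all $i\in\textsc{a}$ and $j\in\textsc{b}$. The degenerate case $c=1$, in which $\textsc{c}$ has a single vertex and every $y$-label equals it, is immediate, so I assume $c\ge 2$.

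For even $c$ I would fix an arbitrary $j$ and let $i$ vary: the identity $\alpha_i\equiv\beta_j-c/2$ forces all $\alpha_i$ to coincide, and symmetrically all $\beta_j$ coincide. Taking $u_\textsc{a}$ and $u_\textsc{b}$ to be these two common values establishes property (1) and the stronger ``both circles'' form of property (2) at once.

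For odd $c$ I would establish (1) first. Fixing any $j_0$, the condition confines every $\alpha_i$ to the pair $\{\beta_{j_0}-h-1,\ \beta_{j_0}-h\}$ of cyclically consecutive vertices, so setting $u_\textsc{a}:=\beta_{j_0}-h-1$ gives $\alpha_i\in\{u_\textsc{a},u_\textsc{a}+1\}$ for all $i$; the symmetric argument produces $u_\textsc{b}$ for $\textsc{b}$. To prove (2) I would argue by contradiction. If both label-sets were genuinely two-valued, pick $i,i'$ with $\alpha_i=u_\textsc{a}$ and $\alpha_{i'}=u_\textsc{a}+1$, and $j,j'$ with $\beta_j=u_\textsc{b}$ and $\beta_{j'}=u_\textsc{b}+1$. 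Writing $d:=u_\textsc{b}-u_\textsc{a}$, the four differences $\beta_j-\alpha_i$, $\beta_{j'}-\alpha_i$, $\beta_j-\alpha_{i'}$, $\beta_{j'}-\alpha_{i'}$ equal $d,\,d+1,\,d-1,\,d$ respectively, and each must lie in $\{h,h+1\}$. For $c\ge 3$ the residues $d-1,d,d+1$ are three distinct values modulo $c$, so they cannot all belong to the two-element set $\{h,h+1\}$ --- a contradiction. Hence at least one label-set is a singleton, which is precisely (2).

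The only routine facts involved are $-h\equiv h+1\pmod c$ and the distinctness of three consecutive residues for $c\ge 3$. I expect the one delicate point to be the bookkeeping of the cyclic ``consecutive'' relation so that $u_\textsc{a}$, $u_\textsc{a}+1$ and $u_\textsc{b}$, $u_\textsc{b}+1$ are unambiguous vertices of $\textsc{c}$ modulo $c$; the conceptual heart of the argument is the three-consecutive-differences computation, which is what forbids both circles from splitting when $c$ is odd.
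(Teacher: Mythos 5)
Your proof is correct and takes essentially the same approach as the paper's: both convert the hypothesis via \cref{le:min} into the condition that every pair of $y$-labels is at cyclic distance $\lfloor c/2\rfloor$, deduce property (i) by fixing one label on the opposite circle and noting that only one ($c$ even) or two adjacent ($c$ odd) vertices lie at that distance, and prove property (ii) by the same four-label contradiction. Your observation that the three consecutive residues $d-1$, $d$, $d+1$ cannot all lie in the two-element set $\{h,h+1\}$ is simply the modular-arithmetic phrasing of the paper's remark that no vertex other than $u_\textsc{a}$ is at distance $\lfloor c/2\rfloor$ from both $u_\textsc{b}$ and $u_\textsc{b}+1$.
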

\begin{proof}
	Suppose that $f_c(\vtx yiac,\vtx yjbc)$ {equals the minimum value of $f_c$} for every $i \in \mysc{a}$ and $j \in \mysc{b}$. By \cref{le:min},  it holds that
	$\big|\vtx yiac - \vtx yjbc\big| \in \big\{\floor{c/2},\ceil{c/2}\big\}$ and 
	$ \mind_c\big(\vtx yiac, \vtx yjbc\big) = \floor{c/2}$.
	
	First we prove property (i). Without loss of generality, we assume that $\mysc d = \mysc{a}$. Let $v := y_j(\mysc{b},\mysc{c})$ for some $j \in \mysc{b}$. The vertex $v$ on $\mysc{c}$ can have only one vertex at distance $\floor{c/2} = c/2$ if $c$ is even and only two vertices at distance $\floor{c/2}$, adjacent to one another, if $c$ is odd. Since $\mind_c(\vtx yiac, v) = \floor{c/2}$ for all $i \in \mysc{a}$, we get that the $\vtx yiac$ labels are all the same in the even case, or all on one of two adjacent vertices in the odd case.
	
	Next we prove property (ii). Suppose for contradiction that $\vtx yiac = u_\mysc{a}$, $\vtx yjac = u_\mysc{a} + 1$, $\vtx ykbc = u_\mysc{b}$, and $\vtx y\ell bc = u_\mysc{b} + 1$ for some $i,j \in \mysc{a}$ and $k,\ell \in \mysc{b}$.
	Since $\mind_c(\vtx yiac, \vtx yjbc) = \floor{c/2}$ for every $i \in \mysc{a}$ and $j \in \mysc{b}$, we get that $\mind_c(u_\mysc{a},u_\mysc{b}) = \mind_c(u_\mysc{a},u_\mysc{b}+1) = \floor{c/2}$. No vertex on $\mysc{c}$ other than $u_\mysc{a}$ is at distance $\floor{c/2}$ from both $u_\mysc{b}$ and $u_\mysc{b}+1$, but $u_\mysc{a}+1 \ne u_\mysc{a}$ must also be at distance $\floor{c/2}$ from both.
\end{proof}

Now, we use these insights in order to improve the lower bound of \autoref{th:general}.
\begin{corollary}[Improvement of lower bound of \autoref{th:general}] \label{cor:improvedLower}
	{Let $\mathfrak t:=\{(m,n,p),(n,p,m),(p,m,n)\}$. Then,  for any integers $m, n, p \geq 3$, it holds that}
	\begin{align*}
		\crN{3}(K_{m,n,p})\geq
		\sum_{(a,b,c)\in \mathfrak t}
		\left(\crN{2}(K_{a,b})
		+ \ ab\left\lfloor \frac{c}{2}\bigg\rfloor
		\bigg\lfloor \frac{c-1}{2}\right\rfloor
		\right) + 2.
	\end{align*}
\end{corollary}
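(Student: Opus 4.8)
The plan is to argue by contradiction: suppose some good tripartite-circle drawing of $K_{m,n,p}$ has fewer than $B+2$ crossings, where $B$ denotes the lower bound of \cref{th:general}. By \cref{th:yitheorem} the crossing count splits into three monochromatic sums and three bichromatic sums; each monochromatic sum is at least the corresponding $\crN{2}(K_{a,b})$ by \cref{eq:MonoCrossings}, and each bichromatic sum is at least its pointwise minimum $ab\lfloor c/2\rfloor\lfloor (c-1)/2\rfloor$ by \cref{yilemma1} together with \cref{le:min}. Writing the total as $B$ plus a nonnegative integer \emph{excess} distributed among these six sums, the assumption forces the excess to be $0$ or $1$; in particular at most one of the six sums is non-minimal, and if so by exactly $1$. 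The goal is to rule out excess $\le 1$.

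The core mechanism I would isolate is a rigidity triggered by an even circle. Suppose some part, placed as an inner circle $\textsc{a}$, has an even number $a$ of vertices and has its bichromatic sum minimal; let $\textsc{c}$ be the outer circle (chosen by the projective transformation) and $\textsc{b}$ the remaining inner one. By \cref{le:ycluster}(ii), evenness of $a$ forces \emph{both} incoming $y$-label families to be constant, so in particular the outer-to-inner labels $\vtx{y}{j}{c}{a}$ are all equal. By the Special Inversion Lemma (\cref{lem:SPECIALinversion}), this is equivalent to $\vtx{x}{i}{a}{c}=\vtx{y}{i}{a}{c}$ for all $i$ on $\textsc{a}$. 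If moreover the bichromatic sum toward the outer circle $\textsc{c}$ is minimal, then \cref{le:ycluster}(i) clusters $\vtx{y}{i}{a}{c}$ into two adjacent positions of $\textsc{c}$, whence the bipartite $x$-labels $\vtx{x}{i}{a}{c}$ all lie within two adjacent vertices. A short computation with $f_c$ shows that such clustered $x$-labels make the monochromatic sum of $K_{a,c}$ exceed $\crN{2}(K_{a,c})$ by at least $2$ as soon as $a,c\ge 3$ (for $K_{3,3}$, for example, the best clustered configuration costs $5$ rather than $3$). This contradicts excess $\le 1$ whenever the two relevant bichromatic sums are both minimal, which can be arranged unless the unique non-minimal sum happens to be one of them.

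This disposes of every instance in which some part is even, modulo the bookkeeping needed when the single non-minimal sum is forced to be the bichromatic sum toward the even circle itself; there I would either invoke a second even part (running the same mechanism on it) or, if the even part is unique, combine the Special Inversion Lemma with a near-minimality (stability) refinement of \cref{le:ycluster} to again produce two extra monochromatic crossings. For the remaining all-odd case I would use \cref{le:min} decisively: when $c$ is odd, a non-minimal value of $f_c$ exceeds the minimum by at least $2$, so any non-minimal bichromatic sum already yields excess $\ge 2$. It then remains to rule out that all three bichromatic sums are simultaneously minimal with all parts odd; here \cref{le:ycluster} only guarantees that one of the two incoming families per circle is constant, and I would chain these constancy constraints cyclically around the three circles, applying the Special Inversion Lemma at the outer circle, to obtain a parity/consistency obstruction forcing one constant $y$-family to coincide with a clustered one and thereby reviving the clustered-$x$-label contradiction.

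The main obstacle I anticipate is precisely this all-odd, all-bichromatic-minimal subcase together with the excess-$1$ bookkeeping. The Special Inversion Lemma applies cleanly only at the outer circle (the only one containing the other two), so propagating constancy information to the two inner circles, and turning the cyclic clustering constraints of \cref{le:ycluster} into a genuine contradiction rather than a merely plausible one, is the delicate part. Quantifying ``clustered $x$-labels cost at least $2$'' uniformly for all $a,c\ge 3$ is routine but must be verified, and it is what pins the improvement at exactly $+2$.
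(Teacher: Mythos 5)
Your clustering mechanism is sound as far as it goes, and it is in fact a dual of the paper's argument: the paper's proof of \cref{cor:improvedLower} runs \cref{le:ycluster}(ii) and \cref{lem:SPECIALinversion} in the opposite direction, showing that if the bichromatic sum toward $\textsc c$ \emph{and} the monochromatic $\textsc{ac}$ sum are both minimal, then the labels $\vtx{y}{j}{c}{a}$ coincide with equally spaced $x$-labels (the equal-spacing property of crossing-minimal bipartite-circle drawings from \cite{RT}), hence occupy at least three vertices of $\textsc a$, so the bichromatic sum toward $\textsc a$ is non-minimal; you instead deduce, from minimality of the two bichromatic sums toward $\textsc a$ and $\textsc c$, that the monochromatic $\textsc{ac}$ sum exceeds $\crN{2}(K_{a,c})$ by at least $2$. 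Both chains are legitimate, but your case analysis does not close, and the two cases you leave open are genuine gaps rather than bookkeeping. First, when the unique non-minimal sum is the bichromatic sum toward the \emph{only} even circle (its excess can be exactly $1$, since \cref{le:min} only forces excess $\ge 2$ for odd circles), neither of your proposed fixes exists: there is no second even part, and the ``near-minimality (stability) refinement of \cref{le:ycluster}'' is a lemma you neither state nor prove, and which the paper does not supply. Second, in the all-odd case with all three bichromatic sums minimal, the ``parity/consistency obstruction'' from cyclically chained constancy constraints is not an argument; you concede yourself that it is only plausible.

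The missing ingredient, which closes both cases, is exactly the equal-spacing step you never invoke. For the first gap, with $\textsc a,\textsc b$ odd and the non-minimal sum toward $\textsc c$: the sum toward $\textsc a$ is minimal, so \cref{le:ycluster}(ii) makes one incoming family on $\textsc a$ constant. If it is the $\textsc b$-family, \cref{lem:SPECIALinversion} gives $\vtx{x}{i}{a}{b}=\vtx{y}{i}{a}{b}$ for all $i$, minimality of the sum toward $\textsc b$ clusters these labels, and your own mechanism forces monochromatic $\textsc{ab}$ excess $\ge 2$, a contradiction. If it is the $\textsc c$-family, \cref{lem:SPECIALinversion} gives $\vtx{x}{i}{a}{c}=\vtx{y}{i}{a}{c}$ for all $i$; minimality of the monochromatic $\textsc{ac}$ sum forces these labels to be equally spaced over at least three vertices of $\textsc c$, so for every $j$ on $\textsc b$ at least one value $f_c\big(\vtx{y}{i}{a}{c},\vtx{y}{j}{b}{c}\big)$ is non-minimal, giving the sum toward $\textsc c$ excess at least $b\ge 3$ and contradicting excess exactly $1$. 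For the second gap, no cyclic obstruction is needed at all: \cref{le:ycluster}(ii) gives constancy of \emph{some} family $\vtx{y}{j}{d}{a}$ with $\textsc d\in\{\textsc b,\textsc c\}$, and \cref{lem:SPECIALinversion} applies to the pair $(\textsc a,\textsc d)$ whichever it is---your worry that the lemma works ``only at the outer circle'' is unfounded, since the projective transformation of \cref{sec:notation} lets any circle play the outer role and the $x$-, $y$-labels are intrinsic---so minimality of the sum toward $\textsc d$ clusters the labels and your mechanism yields monochromatic $\textsc{ad}$ excess $\ge 2$. Finally, your claim that clustered $x$-labels always cost at least $2$ extra whenever $a,c\ge 3$ is checked only for $K_{3,3}$ (where it is tight); it is true, but it must be proved, whereas the paper's route never needs it.
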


\begin{proof}
	Recall that the lower bound  in Theorem \ref{th:general}  was obtained by simultaneously minimizing all six terms in the formula of \cref{th:yitheorem}. 
	Suppose a bichromatic crossing count $\sum_{\substack{i \in \mysc{a}\\j \in \mysc{b}}} f_c(\vtx yiac,\vtx yjbc)$ attains its minimum. Lemma \ref{le:ycluster}~(ii) implies without loss of generality that all $\vtx yiac$ labels are equal. \cref{lem:SPECIALinversion} then implies that $\vtx xjca = \vtx yjca$ for all $j$ on $\mysc{c}$ (and $\vtx xjcb = \vtx yjcb$ for all $j \in \mysc{c}$ if $c$ is even). To achieve the minimum number of monochromatic crossings between $\mysc{a}$ and $\mysc{c}$, the $\vtx xjca$ labels must be equally spaced around $\mysc{a}$ as already observed in \cite{RT}. Then, since $\vtx yjca=\vtx xjca$, the $\vtx yjca$ labels are also equally spaced. Since $a \ge 3$, the $\vtx yjca$ labels are on more than two points. By Lemma \ref{le:ycluster}, the term
	$\sum_{\substack{i \in \mysc{b}\\j \in \mysc{c}}} f_a(\vtx yiba,\vtx yjca)$
	does not attain its minimum.
	
	For $c$ even, Lemma \ref{le:ycluster}~(ii) further implies that all the $\vtx yibc$ labels are equal. By \cref{lem:SPECIALinversion}, $\vtx xjcb = \vtx yjcb$ for all $j \in \mysc{c}$. If the minimum number of monochromatic crossings between $\mysc{b}$ and $\mysc{c}$ is achieved, then the bichromatic crossings term
	$\sum_{\substack{i \in \mysc{a}\\j \in \mysc{c}}} f_b(\vtx yiab,\vtx yjcb)$
	also does not attain its minimum.
	By \cref{le:min}, if $c$ is odd then at least one of the six terms is at least 2 more than its minimum. If $c$ is even then at least two of the six terms are at least 1 more than their minima. Regardless of the parity of $c$, the lower bound given by minimizing all six terms simultaneously can be improved by 2.
\end{proof}

\subsection{Upper bounds}
In this subsection, we provide drawings that settle the upper bounds of \cref{th:general,th:balanced}.
We define the drawing within a small stripe around the equator of the sphere and visualize it by a rectangle where 
the left and right boundaries are identified. 
Consider \cref{fig:genConstrRealizGen} for an illustration. 
In contrast to before, in the following drawings {the interiors of the three circles are disjoint.}
%
\begin{figure}[htb]
	\centering
	\includegraphics[page=5]{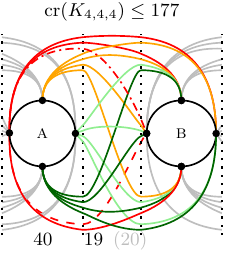}
	\caption{Illustration of the construction of a tripartite-circle drawing of~$K_{m,n,p}$ with $m=4,n=5,p=6$ on a small strip around the equator of a sphere, the left and right boundaries of the rectangle are identified. The subdrawing induced by the circles $\mysc{a}:=\mysc{m}$ and $\mysc{b}:=\mysc{n}$ is highlighted by colors. }\label{fig:genConstrRealizGen}
\end{figure}

\subparagraph{Definition (Linear Description)} 
We start by defining the subdrawing $D'$ induced by the vertices of two distinct circles~\mysc{a} and \mysc{b}. Let \mysc{a} be a circle with $a$ vertices and \mysc{b} a disjoint circle with $b$ vertices. We assume that \mysc{a}  is strictly left of \mysc{b}. The vertices are placed on the circles such that $\left\lceil\frac{a}{2}\right\rceil$ and $\left\lceil\frac{b}{2}\right\rceil$ vertices lie in the closed top halves of \mysc{a} and \mysc{b}, respectively; the vertices on \mysc{a} are labeled clockwise by $\{1,\dots,a\}$ starting with the clockwise first vertex in the closed top half, while the vertices on \mysc{b} are labeled counterclockwise by $\{1,\dots,b\}$ starting with the counterclockwise first vertex in the closed top half. 
Let $\ell_1$ and $\ell_2$ be two vertical lines separating \mysc{a} and \mysc{b} where $\ell_1$ is strictly left of $\ell_2$.
\begin{enumerate}
	\item On $\ell_1$ we mark $a\cdot b$ points, which are labelled by $a_{i,j}$  for $i\in[a]$ and $j\in[b]$ such that the indices increase lexicographically from top to bottom. Each $a_{i,j}$ belongs to an edge of vertex $i$ on $\mysc{a}$; between vertex $i$ and $a_{i,j}$ the edge is realized by some $x$- and $y$-monotone curve $e^1_{i,j}$. Moreover, no two curves $e^1_{i,j}$ intersect.
	\item On $\ell_2$ we mark $a\cdot b$ points, which are labelled by $b_{i,j}$  for $i \in [b]$ and $j\in[a]$ such that the indices increase lexicographically from top to bottom. Each $b_{i,j}$ belongs to an edge of vertex $i$ on $\mysc{b}$;  
	the edge between these two points is realized by some $x$- and $y$-monotone curve $e^2_{i,j}$. Moreover, no two curves $e^2_{i,j}$ intersect.
	\item Between $\ell_1$ and $\ell_2$, we connect $a_{i,j}$ and $b_{j,i}$ by a straight-line segment.
\end{enumerate}
The drawing $D$ is obtained by constructing a drawing $D'$ for each pair of circles and overlaying them.
By construction, the drawing $D$ has the following properties:
\begin{enumerate}
	\item each edge is $x$-monotone,
	\item the drawing is partitioned into six vertical stripes; within each stripe every edge is $x$- and $y$-monotone, 
	\item there exist two types of stripes, either containing $\mysc{ab/ab}$-crossings or $\mysc{ab/bc}$-crossings, and
	\item each edge is contained in three stripes.
\end{enumerate}
These properties imply the following fact. 

\begin{proposition}
	The drawing $D$ is simple.
\end{proposition}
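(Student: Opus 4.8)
The plan is to verify the three defining conditions of a good drawing—no edge crosses itself, adjacent edges do not cross, and independent edges cross at most once—by exploiting the four structural properties of $D$ listed just before the proposition. The key observation is that $x$-monotonicity (property (i)) immediately rules out self-crossings: an $x$-monotone curve meets each vertical line at most once, so it cannot cross itself. This disposes of the first condition with essentially no work.

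For the remaining two conditions, I would argue stripe by stripe, using property (ii) that within each of the six vertical stripes every edge is both $x$- and $y$-monotone. Within a single stripe, two $x$- and $y$-monotone curves can cross at most once (two monotone curves in a common coordinate frame meet at most once), so any pair of edges crosses at most once \emph{within} a given stripe. The subtlety is that an edge passes through three stripes (property (iv)), so two edges could a priori cross once in each of several shared stripes. Here I would invoke property (iii): the stripes come in two flavors, those hosting $\textsc{ab/ab}$-crossings and those hosting $\textsc{ab/bc}$-crossings. A crossing requires its two edges to be of compatible color types, and the stripe decomposition is arranged so that any fixed pair of edges can realize a crossing in at most one stripe. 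Concretely, I would show that if two edges share more than one stripe, then in all but one of those shared stripes the two curves are separated (e.g., lie in disjoint $y$-ranges because of the lexicographic ordering of the marked points $a_{i,j}$ and $b_{i,j}$ on $\ell_1$ and $\ell_2$), so no crossing can occur there. Combining this with the single-stripe bound gives the at-most-one-crossing condition for independent edges.

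For adjacent edges (sharing a vertex), I would treat the two cases separately. Two edges emanating from the same vertex $i$ on circle $\textsc{a}$ leave through distinct marked points $a_{i,j}$ and $a_{i,j'}$ on $\ell_1$; the construction builds the curves $e^1_{i,j}$ so that none of them intersect (stated explicitly in the Linear Description), and the straight segments between $\ell_1$ and $\ell_2$ connect to distinct, consistently ordered endpoints, so adjacent edges stay separated throughout. The case of two edges meeting at a vertex on $\textsc{b}$ is symmetric via the $e^2_{i,j}$ curves. I would make precise that the lexicographic top-to-bottom ordering of the points on $\ell_1$ and $\ell_2$ forces the endpoints to respect the same cyclic order on both sides, so the connecting segments fan out without crossing near a shared vertex.

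The main obstacle I anticipate is the bookkeeping in the independent-edges case: carefully certifying that two edges sharing two of their three stripes actually cross in at most one of them. This amounts to checking that the endpoint orderings induced by the clockwise/counterclockwise vertex labelings and the lexicographic point placements are mutually consistent across the $\ell_1$- and $\ell_2$-segments, so that a potential second crossing is precluded by the two curves being nested or disjoint rather than interleaved in the second shared stripe. I expect this to reduce to a finite case analysis over the color types of the edge pair (monochromatic versus bichromatic, and which pair of circles each edge connects), each case following from the monotonicity within the relevant stripe together with the ordering conventions fixed in the Linear Description.
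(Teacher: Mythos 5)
Your proposal follows essentially the same route as the paper's proof: for each pair of edges there is a unique stripe in which they can possibly cross (same-colored edges are kept disjoint by construction in the circle-containing stripes and can only meet among the straight segments, while differently-colored edges share exactly one stripe), and within that stripe the $x$- and $y$-monotonicity of property (ii) limits the pair to at most one crossing. Your explicit handling of self-crossings and adjacent edges is more detailed than the paper's brief two-line argument, but the underlying idea is identical.
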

\begin{proof}
	For each pair of edges, there exists a unique stripe where the two edges potentially cross. Since, by property (ii), the edges are $x$- and $y$-monotone within each stripe, any pair of edges have at most one point in common. Because edges are $x$-monotone, no edge crosses itself. Thus, $D$ is a simple drawing.
\end{proof}

It remains to analyze the number of crossings {for which we present two proofs. Proof 1 uses the fact that the drawings are simple together with \cref{eq:MonoCrossings,eq:BiCrossings} and offers an insight on what is needed to improve the construction.
	The crossing number can also be directly computed, as shown in Proof 2, providing an upper bound even if the drawings were not simple.}

\begin{proposition}
	For any integers $m, n, p \geq 3$, let $\mathfrak t:=\{(m,n,p),(n,p,m),(p,m,n)\}$.
	The number of crossings in the drawing~$D$ is
	$$\sum_{\substack{(a,b,c)\in\mathfrak t}}
	\left(
	\binom{a}{2}\binom{b}{2}
	+ \ ac\left\lfloor \frac{b}{2}\bigg\rfloor
	\bigg\lfloor \frac{b-1}{2}\right\rfloor
	\right).$$
\end{proposition}
\begin{proof}[Proof 1]
	It is easy to see from the construction that
	$\vtx{x}{i}{a}{b}=\vtx{y}{i}{a}{b}$, $\vtx{x}{i}{b}{a}=\vtx{y}{i}{b}{a}$, and $d_b(\vtx{y}{i}{a}{b},\vtx{y}{j}{c}{b})=\lfloor\frac{b}{2}\rfloor$. 
	Consequently, by \cref{eq:MonoCrossings}, the number of crossings of type \mysc{ab/ab} is
	\begin{equation*}
		\sum_{1\leq i<j\leq a}f_b(\vtx{x}{i}{a}{b},\vtx{x}{j}{a}{b})=\binom{a}{2}\binom{b}{2}.
	\end{equation*}
	By \cref{eq:BiCrossings}, the number of type \mysc{ab/bc} is
	\begin{equation*}
		\sum_{\substack{1\leq i\leq a\\ 1\leq j\leq c}} f_b\big(\vtx{y}{i}{a}{b},\vtx{y}{j}{c}{b}\big)
		=ac\left(\binom{\left\lfloor\frac{b}{2}\right\rfloor}{2}+\binom{\left\lceil\frac{b}{2}\right\rceil}{2}\right)
		=ac
		\bigg\lfloor\frac{b}{2}\bigg\rfloor
		\bigg\lfloor\frac{b-1}{2}\bigg\rfloor.
	\end{equation*}
	This finishes the first proof.
\end{proof}

\begin{proof}[Proof 2]
	Alternatively, we count the number of crossings directly.
	By definition, the \mysc{ab/ab} crossings occur between $\ell_1$ and $\ell_2$; in this part of the drawing the edges are straight-line segments. Any pair of vertices on circle \mysc{a} and any pair of vertices on circle \mysc{b} together form exactly one crossing. We have $\binom{a}{2}\binom{b}{2}$ crossings.
	
	For the crossings of type  \mysc{ab/bc}, it suffices to count the bundle crossings. If two bundles cross they add $ac$ crossings. Moreover, it follows from the construction that two bundles cross if {and only if} they are both in the top or both in the bottom half. Consequently, the number of crossings is $$ac\left({\displaystyle\binom{\left\lfloor\frac{b}{2}\right\rfloor}{2}}+{\displaystyle\binom{\left\lceil\frac{b}{2}\right\rceil}{2}}\right).$$
	As shown above, this evaluates to $ac
	\left\lfloor\frac{b}{2}\right\rfloor
	\left\lfloor\frac{b-1}{2}\right\rfloor$ and therefore finishes the second proof of the proposition. 
\end{proof}

As the proposition was the last missing item, this finishes the proof of the upper bound and thus of \cref{th:general}. Note that this construction achieves the minimum possible number of bichromatic crossings by Lemma \ref{le:mnpp}.

\subsection{Balanced case}
\cref{th:general} and \cref{cor:improvedLower} imply \cref{th:balanced} for the special case of $m=n=p$.
\begin{proof}[Proof of \autoref{th:balanced}]
	For the lower bound, \autoref{th:general}  and \cref{cor:improvedLower} give that \[\crN{3}(K_{n,n,n}) \geq 3\crN{2}(K_{n,n}) + 3 n^2\left\lfloor\frac{n}{2}\right\rfloor\left\lfloor\frac{n-1}{2}\right\rfloor+2.\]
	With the bipartite cylindrical crossing number from  \autoref{eq:bipartitebalanced} we have
	\begin{align*}
		\crN{3}(K_{n,n,n}) &\geq 
		3n\binom{n}{3}+3n^2\bigg\lfloor\frac{n}{2}\bigg\rfloor\left\lfloor\frac{n-1}{2}\right\rfloor +2.
	\end{align*}
	
	For the upper bound, the construction includes drawings for $K_{n,n,n}$. In this case, we obtain highly symmetric drawings, which are especially appealing.
	In particular, such a drawing can be defined by two consecutive stripes; see \cref{fig:k444}. 
	The formula simplifies to 
	\[3\binom{n}{2}^2+3n^2\left\lfloor \frac{n}{2}\right\rfloor\left\lfloor \frac{n-1}{2}\right\rfloor.\qedhere\]
\end{proof} 

While the lower bound order is $\nicefrac{5}{4}\cdot n^4$, the upper bound order is $\nicefrac{6}{4} \cdot n^4$. Consequently, the bounds are fairly close. 
Moreover, instead of a linear representation, similar drawings can be defined in a \emph{cyclic} way, as shown in \cref{fig:genConstrRealizCYC}. 

\begin{figure}[htbp]
	\centering
	\includegraphics[page=1]{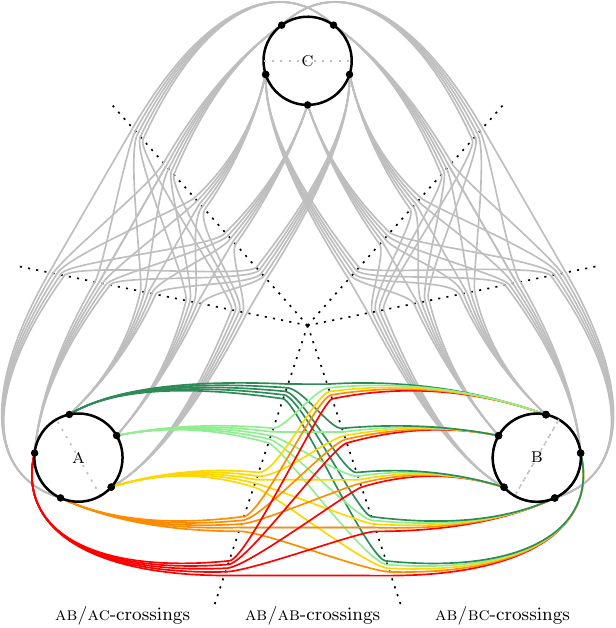}
	\caption{A tripartite-circle drawing of $K_{n,n,n}$ for $n=5$.}\label{fig:genConstrRealizCYC}
\end{figure}

\begin{remark}\label{rem:improvedUpperBound}
	By a slight modification, we improve the upper bound. To do so, we place at least one vertex on the intersection of the closed top and bottom half of the circle and route half of its incident edges via the upper half and the other half of its edges via the bottom half. This idea is used to construct the drawings in \cref{fig:better} (and \cref{fig:k444}).
	
	\begin{figure}[htbp]
		\centering
		\begin{subfigure}[c]{.485\textwidth}
			\centering
			\includegraphics[page=1]{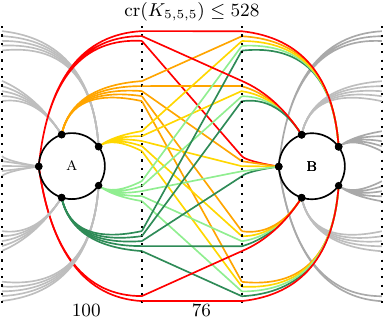}
			\subcaption{A drawing of $K_{5,5,5}$ with 528 crossings.}
		\end{subfigure}\hfil
		\begin{subfigure}[c]{.485\textwidth}
			\centering
			\includegraphics[page=3]{proofIDEA4}
			\subcaption{A drawing of $K_{6,6,6}$ with 1161 crossings.}
		\end{subfigure}
		\caption{}
		\label{fig:better}
	\end{figure}
	
	Depending on the parity of $n$, the number of monochromatic crossings between two circles is
	$$\begin{cases}
		\binom{n-1}2\binom{n}2+(n-1)\left(\binom{\floor{n/2}}{2} +\binom{\ceil{n/2}}{2}\right)  & n \text{ odd,}\\
		2\left(\binom{n/2-1}{2}n^2+\nicefrac{1}{2}(n-2)n^2+\nicefrac{1}{4}\cdot n^2\right)& n \text{ even,}\\
	\end{cases}$$
	while the number of bichromatic crossings is 
	$$\begin{cases}
		n^2\left(\binom{\floor{n/2}}{2} +\binom{\ceil{n/2}}{2}\right) & n \text{ odd,}\\
		\binom{(n-1)}{2}^2+2(n-1)\left(\binom{n/2-1}{2}+\binom{n/2}{2}\right) +(n/2)^2+(n/2-1)^2 & n \text{ even.}\\
	\end{cases}$$
	Consequently, multiplying by three and summing both terms, the number of crossings evaluates to
	$$\begin{cases}
		\nicefrac{3}{4} (2n^4-5n^3+3n^2+n-1) & n \text{ odd,}\\
		\nicefrac{3}{4}(2n^4 - 6 n^3 + 7 n^2) & n \text{ even.}\\
	\end{cases}$$
	Unfortunately, this improves only lower order terms, i.e., 
	the number of saved crossings is
	$$\begin{cases}
		\nicefrac{3}{4}(n^3-n^2-n+1) & n \text{ odd,}\\
		\nicefrac{3}{2}(n^3 - 3 n^2) & n \text{ even.}\\
	\end{cases}$$
\end{remark}

\subsubsection{Balanced case with few vertices}
In this section, we present numerical results and improved drawings of $K_{n,n,n}$ for small values of $n$. 
The values are summarized in \cref{tab:smallN}. 
We improve the upper bounds with concrete drawings and the lower bounds with \cref{cor:improvedLower} and the following fact:
\begin{proposition} \label{prop:lb}
	For any integers $m,n,$ and $p$,
	\[\crN{3}(K_{m,n,p})\geq cr(K_{m+n+p})-\binom{m}{4}-\binom{n}{4}-\binom{p}{4}.\]
\end{proposition}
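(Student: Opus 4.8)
The strategy is to exploit the fact that any tripartite-circle drawing of $K_{m,n,p}$ is, after ignoring the circles, simply a drawing of the graph $K_{m,n,p}$ in the plane, and that we can extend it to a drawing of the complete graph $K_{m+n+p}$ by adding the missing edges (those joining two vertices on the same circle). Let $D$ be an optimal tripartite-circle drawing of $K_{m,n,p}$ realizing $\crN{3}(K_{m,n,p})$ crossings. Since the vertices on each circle are placed on a common circle with no edges crossing that circle, I can draw the missing same-circle edges as chords (or short arcs just inside each circle) so that every such new edge stays within the disk bounded by its circle. The resulting drawing $D^+$ is a drawing of $K_{m+n+p}$, hence has at least $\crg(K_{m+n+p})$ crossings.

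\textbf{Accounting for the new crossings.} The key step is to bound from above how many crossings the added edges can create. First I would observe that a chord inside circle $\textsc{m}$ cannot cross any edge of the original drawing $D$: every edge of $K_{m,n,p}$ incident to $\textsc{m}$ leaves the disk of $\textsc{m}$ immediately and does not re-enter it, and edges not incident to $\textsc{m}$ stay outside that disk entirely (recall edges do not cross the circles). Likewise a chord inside one circle cannot cross a chord inside a different circle, since the three disks are disjoint. Therefore the only new crossings are between pairs of same-circle chords \emph{within the same disk}. For the disk of $\textsc{m}$ with $m$ vertices, the number of crossings among its chords is at most the number of crossing pairs of chords on a convex point set, which is $\binom{m}{4}$ (each set of four points on a circle yields exactly one crossing pair of chords, and at most one if we draw them as straight segments). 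Summing over the three disks, the added edges contribute at most $\binom{m}{4}+\binom{n}{4}+\binom{p}{4}$ crossings.

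\textbf{Combining the bounds.} Putting these together gives
\[
\crg(K_{m+n+p}) \;\le\; (\text{crossings of }D^+) \;=\; \crN{3}(K_{m,n,p}) + (\text{new crossings}) \;\le\; \crN{3}(K_{m,n,p}) + \binom{m}{4}+\binom{n}{4}+\binom{p}{4},
\]
and rearranging yields the claimed inequality
\[
\crN{3}(K_{m,n,p}) \;\ge\; \crg(K_{m+n+p}) - \binom{m}{4}-\binom{n}{4}-\binom{p}{4}.
\]

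\textbf{Main obstacle.} The crux is verifying cleanly that the added same-circle edges can be routed \emph{entirely inside their own disk} so as to avoid all original edges and all chords of other disks, and that within a single disk the number of chord crossings is genuinely at most $\binom{m}{4}$. The disjointness across disks and the non-interference with original edges follow from the defining property that edges do not cross the circles, but one must be a little careful: it is not automatic that drawing the chords as straight segments keeps them inside the disk if the vertices are in convex position on the circle — in fact convexity guarantees exactly this, and for a convex point set the bound of $\binom{m}{4}$ is tight. A subtlety worth noting is that the inequality uses $\crg$ (the ordinary crossing number) rather than the rectilinear one, so one should argue via the existence of \emph{some} drawing of $K_{m+n+p}$ with the stated crossing count, which is exactly what $D^+$ provides; the $\crg(K_{m+n+p})$ on the right is then a valid lower bound for the number of crossings of $D^+$.
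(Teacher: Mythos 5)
Your proposal is correct and is essentially the paper's own (one-sentence) proof: extend an optimal tripartite-circle drawing to a drawing of $K_{m+n+p}$ by adding the same-circle edges inside the circles, note that the only new crossings are the at most $\binom{m}{4}+\binom{n}{4}+\binom{p}{4}$ chord--chord crossings within the three disks, and compare the total against $\crg(K_{m+n+p})$. The one point to phrase carefully is that in the paper's standard nested configuration (circle $\textsc{p}$ enclosing $\textsc{m}$ and $\textsc{n}$) the outer circle's edges must be drawn on its \emph{empty} side (the unbounded face), or equivalently one first applies the projective transformation making the three circles mutually exterior --- which is exactly the ``three disjoint disks'' picture your argument implicitly uses.
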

\begin{table}[b!]
	\centering
	\caption{Bounds of $\ctcr(K_{n,n,n})$ for small $n$.}
	\label{tab:smallN}
	\begin{tabular}{|c|c c| c c|}
		
		\hline &Lower bound &Improved &Improved& Upper bound\\
		$n$ & \cref{th:balanced}& Lower bound& Upper bound &\cref{th:balanced}\\
		\hline
		\hline
		2 &   -- &  3 &     3 &   --\\
		3 &   38 & -- &    42 &    54\\
		4 &  146 & 147&   175 &   204\\
		5 &  452 & -- &   528 &   600\\
		6 & 1010 & -- &  1161 &  1323\\
		7 & 2060 & -- &  2430 &  2646\\
		8 & 3650 & -- &  4176 &  4656\\
		9 & 6158 & -- &  7296 &  7776\\
		10 & 9602& -- & 11025 & 12075\\
		\hline
	\end{tabular}
\end{table}

The proposition follows from the fact that a tripartite-circle drawing of {$K_{m,n,p}$}  yields a drawing of the complete graph $K_{m+n+p}$ by adding all straight-line segments within the three circles; see also \cref{sec:HH}.

In the following we explain how to obtain the bounds displayed in \cref{tab:smallN}.
For $n=2$, it holds that $\crN{3}(K_{2,2,2})\geq 3$ since $cr(K_6)=3$ and \cref{fig:K222} shows that three crossings can be attained.

\begin{figure}[htb]
	\begin{subfigure}[b]{.3\textwidth}
		\centering
		\includegraphics[page=3]{proofIDEA3n}
		\caption{An optimal drawing of $K_{2,2,2}$ with three crossings.}
		\label{fig:K222}
	\end{subfigure}
	\hfil
	\begin{subfigure}[b]{.3\textwidth}
		\centering
		\includegraphics[page=2]{proofIDEA3n}
		\caption{Two drawings of $K_{3,3,3}$ with  42~crossings.}
		\label{fig:333drawings}
	\end{subfigure}
	\hfil
	\begin{subfigure}[b]{.3\textwidth}
		\centering
		\includegraphics[page=1]{proofIDEA3n}
		\caption{Two drawings of $K_{4,4,4}$, one with 177 and one with 180 crossings.}
		\label{fig:k444}
	\end{subfigure}
	\caption{
		{Drawings of $K_{n,n,n}$ for $n=2,3,4$ with few crossings.}
		In (b) and (c), the two drawings are obtained by considering either the dash dotted edge or the dotted edge.}
\end{figure}

For $n=3$, a lower bound of 38 follows from Corollary \ref{cor:improvedLower} and the upper bound of 42 from the drawing in \cref{fig:333drawings}.

In case $n=4$,  we use \cref{prop:lb} for the lower bound. Since $cr(K_{12})=150$, we obtain $\crN{3}(K_{4,4,4})\geq cr(K_{12})-3=147$. For the upper bound, \cref{fig:K444175} presents a drawing with 175 crossings. This drawing is obtained by a slight modification of the drawing corresponding to \cref{fig:k444} with the dash dotted edge. In particular, in the middle copy (orange), the long edge between the leftmost vertex of \mysc b and the rightmost vertex of \mysc c is drawn in the top half, while its corresponding edges in the other two copies are drawn in the bottom half. This saves the two crossings between the middle long edge and the left and right long edges.
\begin{figure}[htb]
	\centering
	\includegraphics[page=4]{proofIDEA3n}
	\caption{A 3-circle drawing of $K_{4,4,4}$ with 175 crossings.}
	\label{fig:K444175}
\end{figure}

For $n\geq 5$, we use the ideas of \cref{rem:improvedUpperBound} to improve the upper bounds.

\clearpage
\section{Connection to the Harary-Hill conjecture}\label{sec:HH}
The Harary-Hill Conjecture \cite{BW,HH} states that the number of crossings in any drawing (in the plane) of the complete graph $K_n$ is at least 
$$H(n):=\frac{1}{4}\bigg\lfloor\frac{n}{2}\bigg\rfloor
\left\lfloor\frac{{n-1}}{2}\right\rfloor
\left\lfloor\frac{{n-2}}{2}\right\rfloor
\left\lfloor\frac{{n-3}}{2}\right\rfloor.$$
Drawings with exactly $H(n)$ crossings \cite{BK,HH}
show that $\crg(K_n)\leq H(n)$. The Harary-Hill conjecture has been confirmed for  $n \leq 12$ (see \cite{G1972} for $n\leq 10$ and \cite{PR} for $n=11,12$), and either $\crg(K_{13})= H(13)$ or $H(13)-2$ \cite{AAFHPPRSV}. The conjecture has been proved when restricted to certain families of graphs \cite{AAFMMMRRV, AAFRS14,AAFRS,AAFRS13,BF,MO}.

For decades, only two families of drawings of $K_n$ with $H(n)$ crossings were known, shown in \cref{fig:optimalDrKn}: the Bla\v{z}ek-Koman construction \cite{BK}, which is an instance of a 1-circle drawing, and the Harary-Hill construction \cite{HH}, which is an instance of a balanced restricted 2-circle drawing. 
\begin{figure}[htb]
	\centering
	\begin{subfigure}[t]{.45\textwidth}
		\centering
		\includegraphics[page=2]{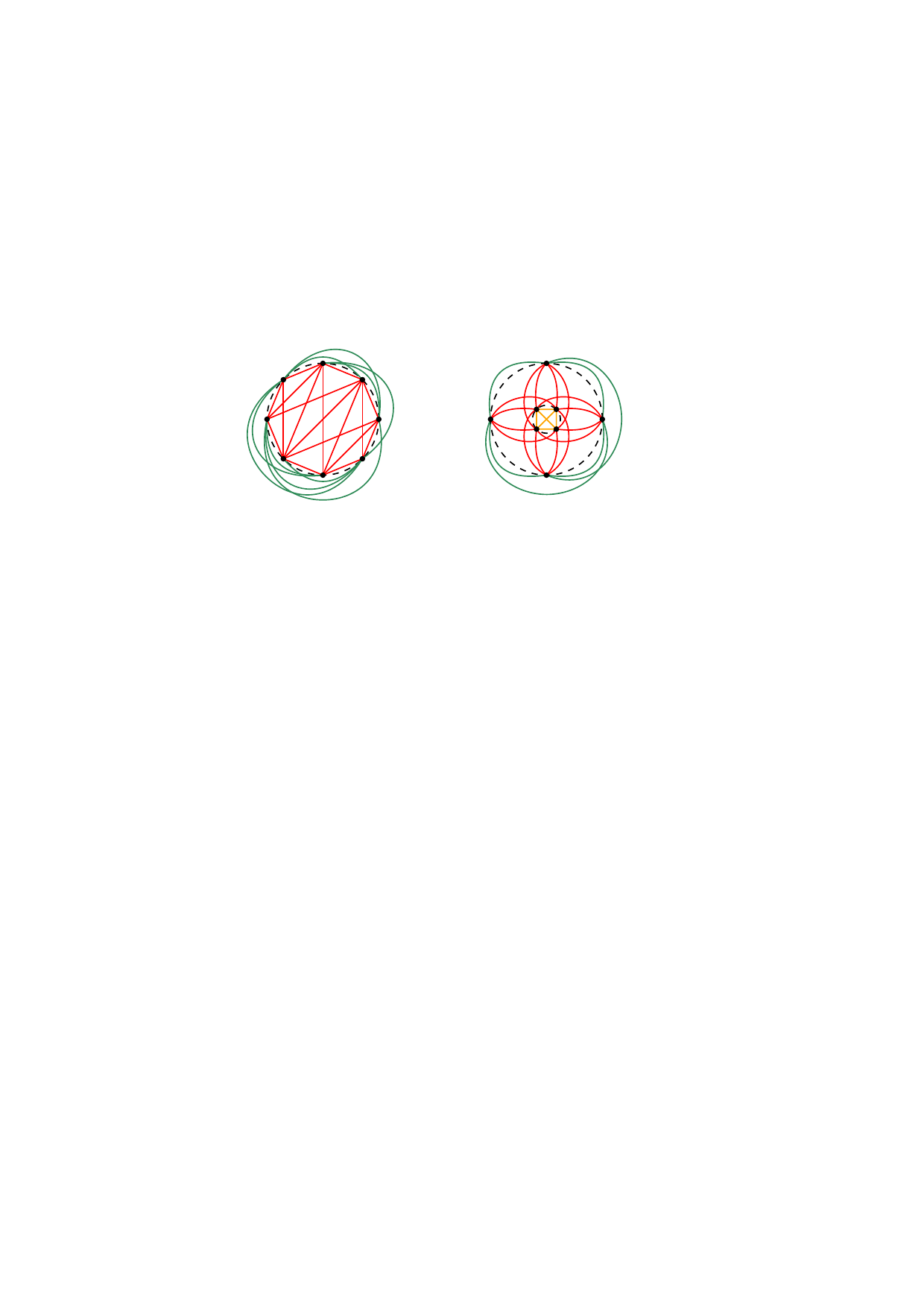}
		\caption{Construction of Bla\v{z}ek and Koman; a 1-circle drawing.}
		\label{fig:optimalDrKnA}
	\end{subfigure}
	\hfil
	\begin{subfigure}[t]{.45\textwidth}
		\centering
		\includegraphics[page=3]{cylindricalBook2}
		\caption{Construction of Harary and Hill; a  2-circle drawing.}
		\label{fig:optimalDrKnB}
	\end{subfigure}
	\caption{Drawings of $K_8$ with $H(8)=18$ crossings.
	}\label{fig:optimalDrKn}
\end{figure}

\'{A}brego et al.~announced in \cite{AAFRV} a new family of drawings of $K_n$ having the property that each edge is crossed at least once, so these are not $k$-circle drawings. Kyn\v{c}l and others \cite{JK} naturally asked about the existence of alternative $k$-circle constructions of $K_n$ with $H(n)$ crossings: Is $H(n)=\bcrN{k}(K_n)$ for some $k\geq 3$? (Recall that $\bcrN{k}$ denotes the minimum number of crossings in a balanced restricted $k$-circle crossing drawing.)
\cref{fig:3circleK6} 
\begin{figure}[htb]
	\centering
	\begin{subfigure}[t]{.31\textwidth}
		\centering
		\includegraphics[page=5]{cylindricalBook2}
		\caption{1-circle drawing}
		\label{fig:3circleK6A}
	\end{subfigure}
	\hfil
	\begin{subfigure}[t]{.31\textwidth}
		\centering
		\includegraphics[page=6]{cylindricalBook2}
		\caption{2-circle drawing}
		\label{fig:3circleK6B}
	\end{subfigure}
	\hfil
	\begin{subfigure}[t]{.31\textwidth}
		\centering
		\includegraphics[page=7]{cylindricalBook2}
		\caption{3-circle drawing}
		\label{fig:3circleK6C}
	\end{subfigure}
	\caption{Balanced 1-circle, 2-circle and 3-circle drawings of $K_6$, each with $3=H(6)$ crossings.}\label{fig:3circleK6}
\end{figure}
shows crossing-optimal balanced 1-circle, 2-circle, and 3-circle drawings of $K_6$. 
We prove that balanced restricted 3-circle drawings are suboptimal for $n$ large enough.

\vspace{1em}

\HH*

\begin{proof}
	Suppose $n\geq 14$. Let $q\geq 5$ and $r\in\{-1,0,1\}$ be the unique integers such that $n=3q+r$. We want to show that $\bcrN{3}(K_n)- H(n)>0$. Consider a balanced restricted 3-circle drawing of $K_n$ with $q$, $q$, and $q+r$ vertices on the 3 circles. Then
	\begin{equation}\label{eq:HH1}
		\bcrN{3}(K_n)=\crN{3}(K_{q,q,q+r})+2\binom{q}{4}+\binom{q+r}{4}.
	\end{equation}
	We use \autoref{th:general} to bound $\crN{3}(K_{q,q,q+r})$ and $\lfloor q/2 \rfloor\lfloor(q-1)/2\rfloor \geq q(q-2)/4$ to remove the floor function, which yields
	\begin{align*}
		\crN{3}(K_{q,q,q+r})
		&\geq\crN{2}(K_{q,q})+2\crN{2}(K_{q,q+r})
		+q^2\left\lfloor \frac{q+r}{2}\right\rfloor\left\lfloor \frac{q+r-1}{2}\right\rfloor
		+2q(q+r)\bigg\lfloor \frac{q}{2}\bigg\rfloor\left\lfloor \frac{q-1}{2}\right\rfloor\\
		&\geq  \crN{2}(K_{q,q})+2\crN{2}(K_{q,q+r})+\frac{1}{4}q^2(q+r)(3q+r-6).
	\end{align*}
	By \autoref{eq:bipartitebalanced}, we have $\crN{2}(K_{q,q})=q\binom{q}{3}.$ 
	By \autoref{eq:bipartiteGeneral}, we obtain $\crN{2}(K_{q,q-1})=(q-2)\binom{q}{3}$ and $\crN{2}(K_{q,q+1})=(q-1)\binom{q+1}{3}$. Thus
	\begin{equation}\label{eq:HH2}
		\crN{3}(K_{q,q,q+r})
		\geq
		\begin{cases} 
			3q\binom{q}{3}+\frac{1}{4}q^3(3q-6)\smallskip &\mbox{if $r=0$,} \\
			(3q-4)\binom{q}{3}+\frac{1}{4}q^2(q-1)(3q-7)&\mbox{if $r=-1$,}\\
			q\binom{q}{3}+2(q-1)\binom{q+1}{3}+\frac{1}{4}q^2(q+1)(3q-5)&\mbox{if $r=1$.}
		\end{cases}
	\end{equation}
	The result holds for $n\geq 14$ ($q\geq 5$) by \eqref{eq:HH1}, \eqref{eq:HH2}, and $H(n)\leq \frac{1}{64}(n-1)^2(n-3)^2$:
	\begin{equation*}
		\bcrN{3}(K_n)- H(n)\geq \begin{cases} 
			\frac{1}{64}(7q^4-24q^3-46q^2+24q-9)>0\smallskip &\mbox{if $r=0$,} \\
			\frac{q}{192}(21q^3-100q^2-36q+112)>0\smallskip &\mbox{if $r=-1$,}\\
			\frac{q}{192}(q+2)(21q^2-86q-8)>0 &\mbox{if $r=1$.}
		\end{cases}
	\end{equation*}
	Finally,  by \eqref{eq:HH1} and \cref{cor:improvedLower}, $\bcrN{3}(K_{9})=\crN{3}(K_{3,3,3})\geq 38>36=H(9)$, $\bcrN{3}(K_{10}) = \crN{3}(K_{3,3,4}) + 1 \geq 64 > 60 =H(10)$, and $\bcrN{3}(K_{13})= \crN{3}(K_{4,4,5}) + 7 \geq 229 > 225 =H(13)$. 
\end{proof}

Our previous argument does not settle the cases $n=8$, $n=11$, and $n=12$. The Harary-Hill constructions for $n\leq 5$ are in fact balanced restricted 3-circle drawings, and we give balanced restricted 3-circle drawings of $K_6$ and $K_7$ {in \cref{fig:3circleK6C,fig:k223}, respectively}.
If we allow for unbalanced constructions for $n \geq 8$, {the crossing number of $\crN{3}(K_{2,2,n-4})$ given in \cref{eq:K22n}}  implies that 
$$\crN{3}(K_{2,2,n-4})+\binom{n-4}{4}=\binom{n-4}{4} + \frac{3}2(n-4)^2-(n-4)-\begin{cases}3& n \text{ even}\\3/2&n \text{ odd}\end{cases}\geq H(n),$$
with equality if and only if $8\leq n \leq 11$. That is, for $n \geq 8$ the drawings of $K_n$ given by a crossing-optimal 3-circle drawing of $K_{2,2,n-4}$ together with the straight-line drawings inside of the three circles achieve $H(n)$ crossings if and only if $8\leq n\leq 11$.

\begin{figure}[htb]
	\centering
	\includegraphics[page=2]{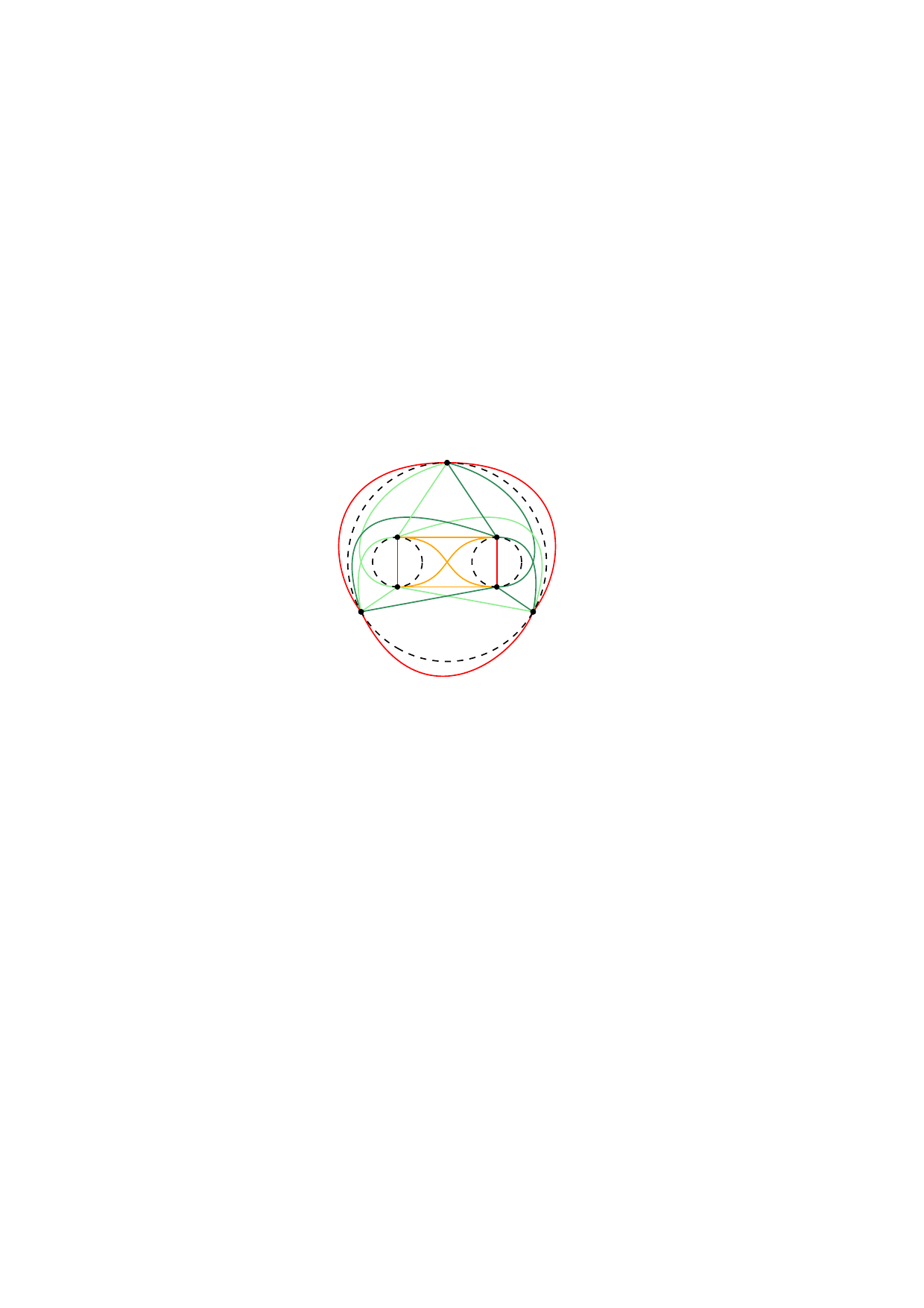}
	\caption{Balanced 3-circle drawing of $K_7$ with $9=H(7)$ crossings.}
	\label{fig:k223}
\end{figure}


\section{Conclusion and open problems}\label{sec:open}

In this paper, we prove upper and lower bounds on the tripartite-circle crossing number of complete tripartite graphs. For the lower bound, we introduce formulas describing the number of crossings in a tripartite-circle drawing. For the upper bounds, we present drawings. 
While there exist restricted balanced $2$-circle drawings achieving the Harary-Hill bound, our results imply that this is not the case for balanced restricted $3$-circle drawings for $n \ge 13$. It remains open for future work whether the same holds for $k$-partite circle drawings when $k > 3$. 
We have made progress in the direction of extending our work to $k > 3$ and plan to return to this question in a subsequent paper. We conclude with a list of interesting open problems for future work: 
\begin{itemize}
	\item Do there exist $k$-circle drawings achieving the Harary-Hill bound for $k>3$?
	\item Can the number of crossings of $k$-circle drawings generally be described by labels analogous to $x$-,$y$-labels?
	\item How are crossing-minimal $k$-circle drawings characterized? 
	\item What are the exact values for small graphs? Is $\crN{3}(K_{3,3,3})=42$? For the remaining displayed values in \cref{tab:smallN}, we believe that the truth lies closer to the presented upper bounds. In particular, it remains to develop better tools in order to improve the lower bounds.
	\item Are there balanced restricted $3$-circle drawings achieving the Harary-Hill bound for $K_8$, $K_{11}$, or $K_{12}$?
	\item We know that balanced restricted $3$-circle drawings do not (in general) achieve the Harary-Hill bound, and that extending tripartite-circle drawings of $K_{2,2,n-4}$ does not (in general) achieve the Harary-Hill bound. Are there other unbalanced restricted $3$-circle drawings that do achieve the Harary-Hill bound? 
\end{itemize}

\section*{Acknowledgments}
As this project started at the MRC workshop 
``Beyond Planarity: Crossing Numbers of Graphs'', we thank the organizers and all participants for the fruitful atmosphere.
We thank Julia B{\"o}ttcher for helpful feedback on a draft of this manuscript. Silvia Fern\'andez-Merchant was supported by the NSF grant DMS 1400653.  Marija Jeli{\'c} Milutinovi{\'c} was supported by Ministry of Education, Science and Technological Development of Serbia, Grant 174034.

	\selectlanguage{english}
	
	\bibliographystyle{plainurl}
	
	\bibliography{bib}
	
\end{document}